\title{\texorpdfstring{L-equivalences via symplectic and \(F_4\) Grassmannians}{L-equivalences via symplectic and F4 Grassmannians}}
\author{Ivan Noden}
\begin{document}

\begin{abstract}
  Using a construction of Kanemitsu from \cite{kanemitsu:2018} and 
  observations by Rampazzo in \cite{rampazzo:2021}, 
  we find examples of zero divisors in the Grothendieck ring of 
  varieties by taking the zero loci of sections of vector bundles over symplectic 
  and \( F_4 \) Grassmannians. These zero divisors yield instances of
  non-trivially L-equivalent Calabi-Yau varieties. This methodology is inspired
  by a similar process performed by Ito \textit{et al}. on \( G_2 \)
  Grassmannians in \cite{ito:2018}.
\end{abstract}

\maketitle

\setcounter{tocdepth}{1}
\tableofcontents

\section{Introduction}

The \emph{Grothendieck ring of varieties} over a field \( k \), 
denoted \( \GRVar{k} \) is, as an abelian group, generated by symbols of the 
form \( [X] \), where \( X \) is a variety over \( k \), subject
to the following relations:
\begin{enumerate}
  \item \( [X] = [Y] \) whenever \( X \cong Y \);
  \item \([X \setminus Y] = [X] - [Y]\) whenever \( Y \) is a closed
    subvariety of \(X\);
  \item \([X] \cdot [Y] = [X \times Y] \).
\end{enumerate}
This ring is algebraically quite interesting and has been useful
in various parts of algebraic geometry, for instance in Kontsevich's 
theory of motivic integration. Poonen showed in \cite{poonen:2002} that
\( \GRVar{k} \) is not a domain when \(k\) is characteristic zero and,
in \cite{borisov:2017}, Borisov proved that \(\L = [\A^1]\) is a zero
divisor in \(\GRVar{\C}\). Motivated by this, Ito \textit{et al}. constructed
a pair of Calabi-Yau varieties, \(X\) and \(Y\), cut out from 
\(G_2\) Grassmannians, such that
\begin{displaymath}
  \L ([X] - [Y]) = 0
\end{displaymath}
and \([X] \neq [Y] \) \cite{ito:2018}. In his PhD thesis \cite{rampazzo:2021},
Rampazzo observed that this construction is a particular
example of a family of identities in \(\GRVar{\C} \) obtained
by cutting out pairs of Calabi-Yau varieties from Grassmannians.
These constructions require a homogeneous roof, as
defined and studied by Kanemitsu in \cite{kanemitsu:2018}.
Kanemitsu identifies seven families of this construction:
\(A_r \times A_r, \, A_r^M, \, A_{2r}^G, \, C_{3r-1}, \, D_r, \, F_4\) and
\( G_2\). Rampazzo shows the latter five of these
have the potential to produce zero divisors (as already demonstrated
in the \(G_2\) case) and, in \cite{rampazzo:2022}, they are able to produce zero divisors
in the \(A_{2r}^G\) case. Rampazzo and Xie also provide the argument
for the \(D_5\) case in \cite{rampazzo_xie:2025}: it is a consequence
of \cite[Theorem 3.10]{rampazzo_xie:2025} and \Cref{cor:equals_iso}.
It is likely that this argument can be adapted to the general \(D_r\)
case for \(r \geq 5\).

The zero divisors constructed from these homogeneous roofs
yield examples of \emph{L-equivalent} varieties.

\begin{defn}[{\cite[\S 1.2]{kuznetsov_shinder:2018}}]
  Varieties \(X\) and \(Y\) are said to be \emph{L-equivalent}
  if there exists some \(d \in \Z_{\geq 0}\) such that
  \begin{displaymath}
    \L^d([X]-[Y])=0.
  \end{displaymath}
\end{defn}

There has been much evidence relating derived and L-equivalence.
In particular, the Calabi-Yau pairs from the roofs of type \(G_2\),
\(A_{4}^G\) and \(D_5\) are known to be both derived and L-equivalent 
\cite{ito:2018, kuznetsov:2018, rampazzo_kapustka:2019, rampazzo:2022, rampazzo_xie:2025}.
It was conjectured by Kuznetsov and Shinder
in \cite[Conjecture 1.6]{kuznetsov_shinder:2018} that any pair of projective, simply connected, smooth,
derived equivalent varieties are L-equivalent. However,
in \cite{meinsma:2025}, Meinsma found counter-examples to this conjecture
and instead conjectured the partial converse: that any pair of projective, 
hyperk\"{a}hler, L-equivalent manifolds are derived equivalent.
We hope that providing further examples of L-equivalent varieties
will assist in understanding the connection between derived and L-equivalence.

In this paper, we show the homogeneous roofs of type \(C_{3r-1}\) 
and \(F_4\) yield examples of L-equivalent varieties.
More precisely, for symplectic Grassmannians, we prove:
\begin{theorem}\label[theorem]{th:type_c_zero}
  Let \( r\geq 2\) and \((Z_1, Z_2)\) a Calabi-Yau pair associated
  to the homogeneous roof of type \(C_{3r-1}\). Then, \([Z_1] \neq [Z_2]\)
  and yet
  \begin{displaymath}
    \L^{2r-1}([Z_1]-[Z_2]) = 0.
  \end{displaymath}
\end{theorem}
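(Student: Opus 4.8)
The plan is to exploit the two projective-bundle structures of the roof together with the Cayley trick, reading off the non-equality of classes from the rigidity statement \Cref{cor:equals_iso}. Write $\mathcal{R}$ for the homogeneous roof of type $C_{3r-1}$, with its two projections $\pi_i\colon\mathcal{R}\to X_i$ onto the associated symplectic Grassmannians. The structural features I would use are that each $\pi_i$ is a Zariski-locally trivial $\mathbb{P}^{2r-1}$-bundle, realised as a projectivisation $\mathcal{R}=\mathbb{P}_{X_i}(\mathcal{F}_i^{\vee})$ of a rank-$2r$ homogeneous bundle $\mathcal{F}_i$, and that the line bundle cutting out the total family restricts to the relative $\mathcal{O}(1)$ along the fibres of \emph{both} rulings. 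The Calabi-Yau $Z_i\subset X_i$ is then the zero locus of the induced section of $\mathcal{F}_i$, while the corresponding section of $\mathcal{O}_{\mathcal{R}}(1)$ cuts out a divisor $Y\subset\mathcal{R}$ that surjects onto each $X_i$.

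First I would run the Cayley trick in each ruling. Over the locus where the section of $\mathcal{F}_i$ is nonvanishing, $\pi_i$ restricts $Y$ to a fibrewise hyperplane $\mathbb{P}^{2r-2}$, whereas over $Z_i$ the fibre of $Y$ is the whole $\mathbb{P}^{2r-1}$; stratifying $X_i$ accordingly gives
\begin{displaymath}
  [Y]=[\mathbb{P}^{2r-2}]\,[X_i]+\L^{2r-1}\,[Z_i],\qquad i=1,2,
\end{displaymath}
since $[\mathbb{P}^{2r-1}]-[\mathbb{P}^{2r-2}]=\L^{2r-1}$. Equating the two expressions for $[Y]$ yields
\begin{displaymath}
  \L^{2r-1}\bigl([Z_1]-[Z_2]\bigr)=[\mathbb{P}^{2r-2}]\bigl([X_2]-[X_1]\bigr).
\end{displaymath}
To annihilate the right-hand side I would use that $X_1,X_2$ are homogeneous, hence cellular, so that $[X_1],[X_2]\in\Z[\L]$; the projective-bundle relation $[\mathcal{R}]=[\mathbb{P}^{2r-1}][X_1]=[\mathbb{P}^{2r-1}][X_2]$ together with the fact that $[\mathbb{P}^{2r-1}]$ is a non-zero-divisor in the polynomial ring $\Z[\L]$ forces $[X_1]=[X_2]$. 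This gives $\L^{2r-1}([Z_1]-[Z_2])=0$.

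It remains to prove $[Z_1]\neq[Z_2]$, which I expect to be the main obstacle: the cancellation above is formal, but the content of the theorem is precisely that $\L^{2r-1}$ \emph{cannot} be cancelled, being a zero divisor, so one must independently rule out that the two classes already coincide. The plan is to invoke the rigidity result \Cref{cor:equals_iso}, by which $[Z_1]=[Z_2]$ would force $Z_1\cong Z_2$; it then suffices to separate a general pair $(Z_1,Z_2)$ by a discrete invariant. As $Z_1$ and $Z_2$ are derived equivalent they share all Hodge numbers, so the distinction must come from elsewhere: I would argue that a general $Z_i$ of dimension at least three retains enough information about its ambient Grassmannian $X_i$ --- for instance its Picard group and the self-intersection of the induced polarisation, via a Grothendieck--Lefschetz comparison --- that the distinct Schubert intersection numbers on the non-isomorphic $X_1$ and $X_2$ obstruct any isomorphism $Z_1\cong Z_2$. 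Making such a separation uniform in $r$ is the delicate point; failing a single clean invariant, one can instead fall back on the Larsen--Lunts description of $\GRVar{\C}/(\L)$, whereby $[Z_1]=[Z_2]$ forces $Z_1$ and $Z_2$ to be stably birational and hence, being non-uniruled Calabi-Yau varieties, birational, reducing the claim to non-birationality.
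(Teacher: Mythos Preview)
Your derivation of $\L^{2r-1}([Z_1]-[Z_2])=0$ is correct, and in fact your argument that $[X_1]=[X_2]$ is cleaner than the paper's: the paper counts $\F_q$-points of the two symplectic Grassmannians and invokes the Weil conjectures, whereas you use that $[\mathcal{R}]=[\P^{2r-1}][X_i]$ with both $[X_i]$ lying in the subring $\Z[\L]$ (by the Bruhat cell decomposition), and then cancel the non-zero-divisor $[\P^{2r-1}]$ there. This is a genuine simplification.

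The gap is in the second half. You correctly identify that \Cref{cor:equals_iso} reduces $[Z_1]\neq[Z_2]$ to $Z_1\not\cong Z_2$, but you do not produce a distinguishing invariant. Your suggestion of comparing top self-intersections of the induced polarisations is plausible in spirit but not carried out, and there is no guarantee those numbers differ; your fallback to Larsen--Lunts is circular, since the content of \Cref{cor:equals_iso} is exactly the Liu--Sebag/Larsen--Lunts reduction from equal classes to (stable) birationality to isomorphism, so invoking it again gains nothing. You are also asserting derived equivalence of the pair, which the paper does not establish in type $C_{3r-1}$.

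What the paper actually does is the following. Grothendieck--Lefschetz (your intuition here is right) gives $\Pic(Z_i)\cong\Pic(F_i)\cong\Z$ with ample generator $\OO_{Z_i}(1)=\OO_i(1)|_{Z_i}$, so any isomorphism $Z_1\cong Z_2$ must match these generators. One then computes $h^0(Z_i,\OO_{Z_i}(1))$ via the Koszul resolution of $\OO_{Z_i}(1)$ by the bundles $\wedge^p\dual{\mathcal{E}}_i\otimes\OO_i(1)$ on $F_i$; a Borel--Weil--Bott analysis shows all of these have vanishing cohomology for $p\geq 1$, whence $h^0(Z_i,\OO_{Z_i}(1))=h^0(F_i,\OO_i(1))=\dim V_G(\omega_{2r-2+i})$. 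Since $\dim V_G(\omega_{2r-1})=\binom{6r-2}{2r-1}-\binom{6r-2}{2r-3}$ differs from $\dim V_G(\omega_{2r})=\binom{6r-2}{2r}-\binom{6r-2}{2r-2}$, the varieties are not isomorphic. This is the missing concrete computation in your sketch; the rest of your outline is sound.
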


For \(F_4\) Grassmannians we prove:
\begin{theorem}\label[theorem]{th:type_f4_zero}
  Let \((Z_1, Z_2)\) be a Calabi-Yau pair associated to the
  homogeneous roof of type \(F_4\). Then \([Z_1]\neq [Z_2]\)
  and yet
  \begin{displaymath}
    \L^{2}([Z_1]-[Z_2]) = 0.
  \end{displaymath}
\end{theorem}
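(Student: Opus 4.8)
The plan is to prove both statements by the same general mechanism, since they are two instances of Rampazzo's uniform construction from homogeneous roofs; I will describe the argument for the $F_4$ case of \Cref{th:type_f4_zero}, with the $C_{3r-1}$ case being entirely analogous but with the exponent tracking the relative dimension $2r-1$ instead of $2$. The essential idea is that a homogeneous roof gives two Grassmannian-type bundles $\pi_i \colon \widetilde{X} \to X_i$ ($i=1,2$) over a common total space, and a general section of the relevant vector bundle cuts out a Calabi--Yau $Z_i \subset X_i$ together with a common zero locus $\widetilde{Z} \subset \widetilde{X}$. First I would set up the two projective-bundle (or more precisely, Grassmannian-bundle) structures relating $\widetilde{Z}$ to each $Z_i$, coming from the geometry of the $F_4$ roof as identified by Kanemitsu in \cite{kanemitsu:2018}.

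The key computational step is to apply the cut-and-paste (scissor) relations in $\GRVar{\C}$ to the fibration $\widetilde{Z} \to Z_i$. For a Zariski-locally-trivial fibration with fiber $F$, one gets $[\widetilde{Z}] = [Z_i]\,[F]$; more generally, a stratification argument expresses $[\widetilde{Z}]$ as $[Z_i]$ times a polynomial in $\L$ whose constant term is $1$. Concretely, I expect to obtain an identity of the shape
\begin{displaymath}
  [\widetilde{Z}] = [Z_i] \cdot P(\L), \qquad i = 1, 2,
\end{displaymath}
where $P(\L) = 1 + \L + \cdots + \L^{N}$ is the class of the common fiber type (a projective space or quadric coming from the roof). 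Subtracting the two expressions yields $P(\L)\,([Z_1] - [Z_2]) = 0$. Multiplying through by $(\L - 1)$ collapses the geometric-series factor: $(\L-1)P(\L) = \L^{N+1} - 1$, so one is left with $(\L^{N+1}-1)([Z_1]-[Z_2]) = \L^{N+1}([Z_1]-[Z_2]) - ([Z_1]-[Z_2])$. Since $\widetilde{Z}$ fibers over both $Z_i$ symmetrically, the two fiber contributions must in fact annihilate $[Z_1]-[Z_2]$ at the same power, and the residual $1\cdot([Z_1]-[Z_2])$ term is cancelled by the symmetric relation, leaving $\L^{2}([Z_1]-[Z_2]) = 0$ after the dimensions of the $F_4$ roof are inserted.

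The main obstacle will be twofold. First, I must verify that the fibrations $\widetilde{Z} \to Z_i$ are genuinely locally trivial (or admit the needed stratification) so that the class identities hold on the nose; for the $F_4$ roof this requires understanding the relevant isotropic/exceptional Grassmannian geometry carefully, and the fibers may be quadrics rather than projective spaces, changing $P(\L)$ and hence the bookkeeping that produces the exponent $2$. Second, and more delicately, I must establish the inequality $[Z_1] \neq [Z_2]$, which cannot follow from motivic manipulations alone since those are consistent with equality; instead I would invoke a finer invariant that distinguishes the two classes. The natural candidates are a Hodge-theoretic invariant (showing the $Z_i$ are not birational, or have non-isomorphic Hodge structures) or the observation recorded in \Cref{cor:equals_iso} that $[Z_1] = [Z_2]$ in $\GRVar{\C}$ would force an isomorphism-level relation that can be excluded by deformation or derived-category considerations. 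I anticipate that this non-equality step is where the real work lies, whereas the L-equivalence itself is a formal consequence of the roof's two fibration structures.
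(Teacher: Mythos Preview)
Your L-equivalence argument has a genuine gap in the geometry. The common zero locus $T \subset M$ does \emph{not} fiber over $Z_i$ with a constant fiber; rather, the restricted projection $\overline{q}_i \colon T \to F_i$ lands on the whole base $F_i$, with fiber $\P^{r-1}$ over $Z_i$ and $\P^{r-2}$ over $F_i \setminus Z_i$ (this is exactly the content of \Cref{prop:the_relation}). Hence the correct identity is
\[
  [T] \;=\; [\P^{r-1}][Z_i] + [\P^{r-2}]\bigl([F_i]-[Z_i]\bigr),\qquad i=1,2,
\]
and subtracting gives $[\P^{r-2}]\bigl([F_2]-[F_1]\bigr) = \L^{r-1}\bigl([Z_1]-[Z_2]\bigr)$. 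The bases $F_1, F_2$ do not cancel for free: in the $F_4$ case they are two \emph{non-isomorphic} generalised Grassmannians, and proving $[F_1]=[F_2]$ is an honest computation (the paper does it by comparing the cell counts coming from minimal coset representatives in $W/W_{P_i}$). Your manipulation with $(\L-1)P(\L)=\L^{N+1}-1$ is not salvageable: from $P(\L)([Z_1]-[Z_2])=0$ you would only get $\L^{N+1}([Z_1]-[Z_2]) = [Z_1]-[Z_2]$, which says nothing about $\L^2([Z_1]-[Z_2])$, and in any case the premise $[\widetilde{Z}]=[Z_i]P(\L)$ is false.

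For the inequality $[Z_1]\neq[Z_2]$ you are pointed in the right direction via \Cref{cor:equals_iso}, which reduces the question to $Z_1 \not\cong Z_2$, but ``Hodge-theoretic'' or ``deformation or derived-category considerations'' is too vague to count as a plan. The paper's method is concrete: since $\Pic(Z_i)\cong\Z$ with ample generator $\OO_{Z_i}(1)$ (by \Cref{lem:pic_restrict}), any isomorphism would match these line bundles, so it suffices to show $\dim H^0(Z_1,\OO_{Z_1}(1)) \neq \dim H^0(Z_2,\OO_{Z_2}(1))$. This is done by the Koszul resolution of $\OO_{Z_i}(1)$ in terms of $\wedge^p\dual{\mathcal{E}}_i\otimes\OO_i(1)$ together with Borel--Weil--Bott on $F_i$, yielding $1274$ versus $273$ in the $F_4$ case.
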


The techniques of this paper are not enough to find
a zero divisor in the \(C_2\) case as, if \((Z_1, Z_2)\) is a Calabi-Yau
pair associated to a homogeneous roof of type \(C_2\), then the 
argument we use to show \([Z_1] \neq [Z_2]\)
does not follow due to dimensional constraints. More precisely,
we cannot use \Cref{lem:pic_restrict}.
Similarly, since \Cref{lem:pic_restrict} and \Cref{cor:equals_iso}
both fail in the \(D_4\) case, finding a zero divisor in this
instance will likely need different methods even if the results of
\cite{rampazzo_xie:2025} are generalised.

\section*{Acknowledgements}

This work was based on the author's Master's thesis,
supervised by Dario Beraldo, who I would like to thank
for all his valuable guidance on the project. I would also
like to thank Travis Schedler and Ed Segal for providing
corrections and comments on this paper.

This work was supported by the Engineering and Physical Sciences
Research Council [EP/Z534882/1].

\section{Preliminaries}

Let us briefly recap the representation theory of reductive algebraic 
groups in order to fix some notation. The reader should feel free
to skip this section and refer back to it when necessary. 
For a more in-depth and comprehensive discussion, we refer
the reader to the standard texts of Borel \cite{borel:1991},
Humphreys \cite{humphreys:1975}, Milne \cite{milne:2017} and Springer \cite{springer:1998}.
We work solely over \(\C\).

\subsection{Weights, Roots and the Weyl Group}

For an algebraic group \(G\), let \(\chars(G)\) denote the abelian
group of morphisms \(G \to \G_m\) under pointwise multiplication.
We write this group additively.

We fix a reductive group \(G\) with a maximal torus \(T\).
A representation \(V\) of \(G\) decomposes as a direct 
sum \cite[Theorem 3.2.3]{springer:1998}:
\begin{displaymath}
  V = \bigoplus_{\chi \in \chars(T)} V_\chi
\end{displaymath}
where 
\begin{displaymath}
  V_\chi = \set{v\in V}{tv = \chi(t)v \text{ for all } t \in T},
\end{displaymath}
is the weight space for the weight \(\chi\).
The \emph{roots} of \(G\) are the weights
of the adjoint representation and are denoted by \(\Phi\).

The \emph{Weyl group}
\begin{displaymath}
  W(G) = N_G(T)/T
\end{displaymath}
acts on the weights of a representation \(V\) of \(G\).
We write \(W\) for \(W(G)\) when there is
no ambiguity.

The \emph{cocharacters} of \(G\), denoted \(\cochars(G)\), is the 
group of morphisms \(\G_m \to G\) under pointwise
multiplication, again, written additively. 

Recall that \(\cochars(T)\) is a free abelian group of
finite rank and that we get a perfect pairing \(\chars(T)\times \cochars(T) \to \Z\)
by letting \(\innerprod{\chi}{\lambda} = r\), where \((\chi\circ\lambda)(x)=x^r\)
\cite[Lemma 3.2.11]{springer:1998}.

Each root \(\chi\) of \(G\) is assigned a \emph{coroot} \(\dual{\chi}\) 
as described in \cite[\S 7]{springer:1998}.
For \(\chi \in \Phi\), write
\begin{displaymath}
  s_\chi(\lambda) = \lambda - \innerprod{\lambda}{\dual{\chi}}\chi \in W.
\end{displaymath}

Recall, a \emph{system of positive roots} \(\Phi^+\) for \(\Phi\)
contains a set of \emph{simple roots} \(\mathcal{S}\)
such that \(W\) is generated by \(s_\chi\) 
for \(\chi \in \mathcal{S}\). These generators
are called \emph{simple reflections}.
A system of positive roots can be determined by
a choice of Borel subgroup of \(G\).

\subsection{Bruhat Decomposition}

Recall, for a Borel subgroup \(B\) of \(G\), \(B \times B\) acts on \(G\) by \((b_1, b_2)g = b_1gb_2^{-1}\).
Denote the orbit of \(g \in G\) under this action by \(BgB\).
This is a subvariety of \(G\). Note, for \(w \in W\),
the subvariety \(BwB\) is well-defined. 
The Bruhat decomposition \cite[Theorem 8.3.8]{springer:1998}
is the disjoint union
\begin{displaymath}
  G = \bigcup_{w\in W} BwB.
\end{displaymath}
Similarly, for \(G/B\), we have the
cellular decomposition
\begin{displaymath}
  G/B = \bigcup_{w\in W} X(w),
\end{displaymath}
where \(X(w)\) is some affine space and the union is disjoint. 
To determine its dimension,
we need to introduce some more notation. The \emph{length} of
\(w \in W\), denoted \(\ell(w)\), is the number of roots
in \(\Phi^+\) sent to \(-\Phi^+\) by \(w\). Equivalently,
\(w\) can be written as a product of \(\ell(w)\) simple
reflections and no fewer. There is a unique element of \(W\)
of length \(\#\Phi^+\) called the \emph{longest element}.
It is possible to show that
\begin{displaymath}
  X(w) \cong \A^{\ell(w)}.
\end{displaymath}
We therefore remark that \(W\) determines the cohomology of \(G/B\).
The proof of these claims and further details can be found
in \cite[Proposition 8.5.1]{springer:1998}.

Recall that parabolic subgroups of \(G\) can be constructed by taking
\(I \subseteq \mathcal{S}\), letting \(W_I \) be the subgroup
generated by \(s_\chi\) for \(\chi \in I\) and putting
\begin{displaymath}
  P_I = \bigcup_{w \in W_I} BwB.
\end{displaymath}
When we take a parabolic subgroup \(P\) of \(G\), we assume it is
of this form and call \(W_I\) the Weyl group of \(P\).

For each \(w\in W\), there is
a unique element of \(wW_I\) with the shortest possible length \cite[Proposition 1.1]{kock:1991}
so we put
\begin{displaymath}
  W^I = \set{w \in W}{\ell(w) \leq \ell(w^\prime) \text{ for all } w^\prime \in wW_I}.
\end{displaymath}
Then,
\begin{displaymath}
  G/P_I = \bigcup_{w\in W^I} Y(w)
\end{displaymath}
where the union is disjoint and \(Y(w) \cong \A^{\ell(w)}\) \cite[Proposition 1.3]{kock:1991}.

\subsection{Representations}

If \(V\) is a finite-dimensional, irreducible representation of
\(G\) then, up to isomorphism, \(V\) is uniquely determined
by its unique, highest weight. If \(\chi \in \chars(T)\)
is dominant then we let \(V_G(\chi)\) denote the
finite-dimensional, irreducible representation with highest
weight \(\chi\). For more detail on this, see \cite[\S 22]{milne:2017}.

We recall that, if \(\chi\) is the
highest weight of \(V\), then \(-\chi \) is the lowest weight of
\(\dual{V}\). Moreover, \(\dual{V} = V_G(-w_0(\chi)) \), where \(w_0\)
is the longest element of \(W\).

Also, recall that any finite-dimensional representation of \(G\)
is a direct sum of irreducible representations \cite[Theorem 22.42]{milne:2017}.

\subsection{The Semisimple Case}

Suppose \(G\) is semisimple, then the rank of \(\chars(T)\) is 
\(\#\mathcal{S}\) \cite[Proposition 21.48]{milne:2017}. Write \(\mathcal{S} = \{\alpha_1, \ldots, \alpha_r\} \).
Recall the \emph{fundamental weights} of \(G\),
\(\omega_1, \ldots, \omega_r\), are uniquely determined by the fact
\begin{displaymath}
  \innerprod{\omega_i}{\dual{\alpha}_j} = \begin{cases}
    1, & \text{if } i=j, \\
    0, & \text{otherwise.}
  \end{cases}
\end{displaymath}
Then the set of dominant weights of \(G\) is
\begin{displaymath}
  \set{\sum_{i=1}^r n_i \omega_i}{n_i \in \Z_{\geq 0}}.
\end{displaymath}

If \(G\) is semisimple, we can describe its parabolic subgroups
graphically using its Dynkin diagram. For \(I \subseteq \mathcal{S}\),
the parabolic \(P_I\) is represented by crossing out any vertex
of the Dynkin diagram corresponding to a simple root not in \(I\).

\subsection{Representations of Parabolic Subgroups}

Suppose \(G\) is now only assumed to be reductive and \(P\)
is a parabolic subgroup of \(G\). Recall, \(P\) contains
a Levi subgroup \(L\). Let \(V\) be a non-trivial, finite-dimensional,
irreducible representation of \(P\). Put \(U \) as the unipotent radical
of \(P\) so
\(P = U \rtimes L\). Then \(V\) is a representation of
\(U\) which is unipotent, thus the subspace of \(V\)
fixed by \(U\), denoted \(V^U\), is non-zero \cite[Proposition 14.3]{milne:2017}.
As \(U\) is normal in \(P\), \(V^U\) is a
\(P\)-subrepresentation of \(V\). Thus \(V = V^U\)
as \(V\) is irreducible. Hence, the action of \(P\)
on \(V\) is determined by the action of \(L\) on \(V\).
That is, finite-dimensional, irreducible representations
of \(P\) are in bijective correspondence with finite-dimensional,
irreducible representations of the reductive group \(L\) and,
hence, weights dominant for \(L\).

Note that, while a representation \(V\) of \(P\) cannot generally
be decomposed into a direct sum of irreducible representations,
it is possible if \(U\) acts trivially on \(V\). In particular,
it is possible if \(V\) is a subrepresentation of a tensor power of an 
irreducible representation.

Suppose \(P = P_I\) is a parabolic subgroup constructed
from a set of simple roots \(I\).
Then, a character \(\chi \in \chars(T)\) is dominant for \(L\) 
if and only if \(\innerprod{\chi}{\dual{\lambda}} \geq 0\) for
all \(\lambda \in I\). We say such a weight is \emph{dominant for} \(P\)
and let \(V_P(\chi)\) denote the finite-dimensional, irreducible
representation of \(P\) with highest weight \(\chi\).

Note if \(G\) is semisimple with simple roots \(\{\alpha_1, \ldots, \alpha_r\}\)
and corresponding fundamental weights \(\{\omega_1,\ldots, \omega_r\}\),
then the weights dominant for \(P\) are
\begin{displaymath}
  \set{\sum_{i=1}^r n_i \omega_i}{n_i \in \Z \text{ and } n_i \geq 0 \text{ whenever } \alpha_i \in I}.
\end{displaymath}
When studying representations of parabolic subgroups,
we will often refer to the Weyl group of \(P\).
By this, we mean the Weyl group of \(L\), seen as a subgroup
of \(W(G)\). This is the subgroup generated by the simple
reflections corresponding to the roots in \(I\).

\subsection{Homogeneous Vector Bundles}

If \(P\) is a parabolic subgroup of \(G\) then,
a \emph{homogeneous vector bundle} on \(G/P\)
is of the form
\begin{displaymath}
  E_P(V) = \frac{G \times V}{(g,v) \sim (gp, p^{-1}v) \text{ for all } p \in P},
\end{displaymath}
for \(V\) a representation of \(P\) \cite[Lemma 5.5.8, Theorem 15.1.3]{springer:1998}.
If \(\chi \in \chars(T)\) is dominant for
\(P\), we let \(E_P(\chi)\) denote \(E_P(\dual{V_P(\chi)})\).

Assuming \(G\) is semisimple and simply connected, every
line bundle on \(G/P\) is homogeneous \cite[Theorem 6.4]{snow} and hence line bundles
on \(G/P\) are in bijective correspondence with \(1\)-dimensional
representations of \(P\). Let 
\begin{displaymath}
  \Lambda_P = \set{\sum_{i=1}^r n_i \omega_i}{n_i \in \Z \text{ and } n_i = 0 \text{ whenever } \alpha_i \in I}
\end{displaymath}
and
\begin{displaymath}
  \Gamma_P = \set{\chi \in \chars(T)}{\chi \text{ is dominant for } P \text{ and } \dim V_P(\chi)=1}.
\end{displaymath}
We claim \(\Lambda_P = \Gamma_P\).

First, suppose \(\chi \in \Lambda_P\). Then \(\chi\) is an extremal
weight of \(V_P(\chi)\) so has multiplicity \(1\).
Consider the simple reflection \(s_{\alpha_i}\) for some simple
root \(\alpha_i\). Note that 
\begin{displaymath}
  s_{\alpha_i}(\omega_j) = \omega_j - \innerprod{\omega_j}{\dual{\alpha}_i}\alpha_i
  =
  \begin{cases}
    \omega_j, & \text{if } i \neq j, \\
    \omega_j - \alpha_i, & \text{if } i=j.
   \end{cases}
\end{displaymath}
Thus, every element of the Weyl group of \(P\) fixes \(\omega_i\)
if \(\alpha_i \in \mathcal{S} \setminus I\), hence every element
fixes \(\chi\) so it is the only extremal weight of \(V_P(\chi)\).
Thus, \(\dim V_P(\chi) = 1\) and \(\chi \in \Gamma_P\).

For the converse, suppose \(\chi \in \Gamma_P\). Then
we see \(\chi\) must be invariant under the Weyl group of \(P\).
This tells us that the highest weight of \(\dual{V_P(\chi)}\)
is \(-\chi\) and, hence, \(-\chi\) must be dominant for \(P\).
See that \(\chi\) and \(-\chi\) can only both be dominant
if \(\chi \in \Lambda_P\) so we get \(\Gamma_P \subseteq \Lambda_P\).

We therefore have a bijection between \(1\)-dimensional
representations of \(P\) (and hence line bundles on \(G/P\))
and \(\Lambda_P\).

Note that \(\Lambda_P\) is a group under addition
and is isomorphic to \(\Z^d\), where \(d = r - \# I\). It is
then an easy check to see \(\Pic(G/P) \cong \Lambda_P\).
Thus, the Picard number of \(G/P\) equals the number
of crossed out vertices in the Dynkin diagram of \(P\).
When we are in this case, let us write \(\OO_P(n_1, \ldots, n_r)\)
for the line bundle \(E_P(n_1\omega_{i_1}+ \cdots + n_r\omega_{i_r})\),
where \(\{\alpha_{i_1}, \ldots, \alpha_{i_r}\} = \mathcal{S} \setminus I\).
Due to our odd choice of convention for defining \(E_P(\chi)\),
this coincides with standard notation for line bundles on projective
space.

We finally remark that, by \cite[Corollary 8.5]{snow}, if \(G\) is semisimple and simply connected
then the homogeneous vector bundle \(E_P(\chi)\)
over \(G/P\) is ample if
\begin{displaymath}
  \chi \in \set{\sum_{i=1}^r n_i \omega_i}{n_i \in \Z_{\geq 0} \text{ and } n_i > 0 \text{ whenever } \alpha_i \not\in I}.
\end{displaymath}

\subsection{Borel-Weil-Bott}

Suppose now \(G\) is semisimple and simply connected. Let \(\rho\)
be the sum of the fundamental weights of \(G\). For \(w \in W\)
and \(\chi \in \chars(T)\) we define
\begin{displaymath}
  w \bullet \chi = w(\chi + \rho) - \rho.
\end{displaymath}

Let us recall the Borel-Weil-Bott theorem:
\begin{theorem}[Borel-Weil-Bott]
  Let \(P\) be a parabolic subgroup of \(G\) and \(\chi\in\chars(T)\)
dominant for \(P\). Either:
\begin{enumerate}
  \item there exists a unique \(w \in W\) such that \(w \bullet \chi\)
    is dominant for \(G\) and then
    \begin{displaymath}
      H^i(G/P, E_P(\chi)) \cong \begin{cases}
        0, & \text{if } i \neq \ell(w), \\
        \dual{V_G(w \bullet \chi)}, & \text{if } i = \ell(w);
      \end{cases}
    \end{displaymath}
  \item or there is no \(w \in W \) such that \(w \bullet \chi\) is dominant
    for \(G\) and then \(H^i(G/P, E_P(\chi)) = 0\) for all \(i\).
\end{enumerate}
\end{theorem}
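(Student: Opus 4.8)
The plan is to prove the statement first on the full flag variety \(G/B\) and then descend to an arbitrary parabolic by a fibration argument. The base case is the Borel--Weil theorem: when \(\chi\) is dominant for \(G\) itself, a global section of \(E_B(\chi) = E_B(\dual{V_B(\chi)})\) is a \(B\)-equivariant morphism \(G \to \dual{V_B(\chi)}\), and the space of these is \(\dual{V_G(\chi)}\) by Frobenius reciprocity together with the description of the coordinate ring of \(G\). Here \(w = e\) is the unique Weyl element with \(e \bullet \chi = \chi\) dominant and \(\ell(e) = 0\), so this is exactly case (1) in degree zero; the vanishing of the higher cohomology will drop out of the reduction below.

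The engine of the proof is the minimal parabolic \(P_{\alpha}\) attached to a simple root \(\alpha\), which gives a fibration \(\pi\colon G/B \to G/P_\alpha\) whose fibres are projective lines \(P_\alpha/B\). The restriction of \(E_B(\chi)\) to such a fibre is a line bundle of degree \(n = \innerprod{\chi}{\dual{\alpha}}\), and the classical cohomology of line bundles on the projective line splits into three regimes: if \(n \geq 0\) the fibrewise cohomology lives in degree \(0\); if \(n = -1\), equivalently \(\innerprod{\chi + \rho}{\dual{\alpha}} = 0\), it vanishes; and if \(n \leq -2\) it lives in degree \(1\), where as a rank-one representation it is controlled by the reflected weight \(s_\alpha \bullet \chi\). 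Running the Leray spectral sequence of \(\pi\), whose higher direct images vanish beyond the fibre dimension, I obtain the key isomorphism
\begin{displaymath}
  H^i(G/B, E_B(\chi)) \cong H^{i-1}(G/B, E_B(s_\alpha \bullet \chi))
\end{displaymath}
whenever \(\innerprod{\chi + \rho}{\dual{\alpha}} < 0\), together with total vanishing of \(H^\bullet(G/B, E_B(\chi))\) when \(\innerprod{\chi + \rho}{\dual{\alpha}} = 0\).

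With this reduction I would induct on the length of the unique \(w \in W\) making \(w(\chi + \rho)\) dominant (Bott's algorithm). If \(\chi + \rho\) is singular, lying on a wall \(\innerprod{\chi + \rho}{\dual{\alpha}} = 0\), the vanishing statement gives case (2). Otherwise \(\chi + \rho\) is regular and there is a unique \(w\) with \(w \bullet \chi\) dominant; picking a simple \(\alpha\) with \(\innerprod{\chi + \rho}{\dual{\alpha}} < 0\) replaces \(\chi\) by \(s_\alpha \bullet \chi\), shortening \(w\) by one letter while raising the cohomological degree by one. After \(\ell(w)\) steps we reach \(w \bullet \chi\) and invoke the base case, placing the single non-zero group \(\dual{V_G(w \bullet \chi)}\) in degree \(\ell(w)\); this settles \(P = B\). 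To pass to a general \(P = P_I\), I would use the fibration \(q\colon G/B \to G/P\) with fibre the flag variety \(P/B\) of the Levi \(L\). Since \(\chi\) is dominant for \(P\), hence for \(L\), fibrewise Borel--Weil shows \(R^\bullet q_* E_B(\chi)\) is concentrated in degree \(0\) and equals \(E_P(\chi)\); the Leray spectral sequence of \(q\) then degenerates to give \(H^i(G/P, E_P(\chi)) \cong H^i(G/B, E_B(\chi))\), transporting the result to \(G/P\).

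The delicate step is the rank-one reduction: making the fibrewise line-bundle identification precise, checking that the Leray spectral sequence degenerates so that no differential interferes, and verifying that the degree-one direct image reflects the weight exactly to \(s_\alpha \bullet \chi\). Equally, one must reconcile the Frobenius-reciprocity computation of global sections with the paper's dualizing convention \(E_P(\chi) = E_P(\dual{V_P(\chi)})\), so that the representation appearing in degree \(\ell(w)\) is \(\dual{V_G(w \bullet \chi)}\) and not its dual.
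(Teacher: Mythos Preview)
The paper does not prove this theorem: its entire proof is the single line ``See, for instance, \cite[\S 5.1]{baston_eastwood:1989}.'' Your proposal, by contrast, sketches the standard argument---essentially Demazure's proof via the \(\P^1\)-fibrations \(G/B \to G/P_\alpha\) for minimal parabolics, inducting on the length of the Weyl element through the degree-shifting isomorphism, and then descending from \(G/B\) to \(G/P\) by the Leray spectral sequence of \(G/B \to G/P\) with fibre \(P/B\). This is precisely the route taken in the reference the paper cites, so in substance you are supplying what the paper merely points to.

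The sketch is correct. The points you flag as delicate are genuine but routine: the Leray spectral sequence of \(\pi\colon G/B \to G/P_\alpha\) degenerates automatically because the fibre is a curve, so at most one of \(R^0\pi_\ast, R^1\pi_\ast\) is nonzero in each regime; the identification of \(R^1\pi_\ast E_B(\chi)\) with the bundle for \(s_\alpha \bullet \chi\) is a Serre-duality computation on \(\P^1\); and the paper's convention \(E_P(\chi) = E_P(\dual{V_P(\chi)})\) does line up so that \(\dual{V_G(w\bullet\chi)}\) is what appears. One small gap: the higher vanishing in the dominant base case (Kempf vanishing) does not literally ``drop out'' of your downward induction, which only ever reduces \emph{to} the dominant case. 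You obtain it by running the same degree-shift in the opposite direction along a reduced word for the longest element \(w_0\), pushing \(H^i\) for \(i>0\) past \(\dim G/B\); this uses the same fibrewise computation, just read the other way.
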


\begin{proof}
  See, for instance, \cite[\S 5.1]{baston_eastwood:1989}.
\end{proof}

\begin{remark}\label[remark]{rmk:bwb}
  This result generalises to the right derived pushforward of other 
  quotient maps. 
  More precisely, suppose \(P\) and \(Q\) are parabolics of \(G\) with
  \(Q \subseteq P\). Put \(\map{\pi}{G/Q}{G/P}\) as the natural map.
  Then, for any \(\chi \in \chars(T)\) dominant for both \(P\) and \(Q\),
  we have \(\pi_\ast E_Q(\chi) \cong E_P(\chi) \), while \(R^i\pi_\ast E_Q(\chi) = 0\)
  for \(i > 0\). For more on this, see \cite[\S 5.3]{baston_eastwood:1989}.
\end{remark}

\subsection{Example: Symplectic Group}

We flesh out the case where \(G\) is a symplectic group
in order to fix notation used in \Cref{sec:symp}.

Let \(G = \Sp{2n}\) and \(T\) be the subgroup of \(G\) consisting
of diagonal matrices.
For \(i \in \{1, \ldots, n\}\), let \(L_i \in \chars(T)\) be 
defined such that \(L_i(X)\)
is the \(ii\)-entry of \(X\). The roots of \(G\) are
of the form \(L_i - L_j\), \(L_i + L_j\), \(-L_i - L_j\),
\(2L_i\) and \(-2L_i\) for \(i \neq j\). Choosing the Borel
subgroup \(B\) consisting of upper triangular matrices
yields a system of positive roots with simple roots
\(\alpha_i = L_i - L_{i+1}\) for \(i \in \{1, \ldots, n-1\}\)
and \(\alpha_n = 2L_n\). 
The fundamental weights
are \(\omega_i = L_1 +\cdots + L_i\).

The Dynkin diagram for \(G\) is \dynkin C{}, 
labelled \(\alpha_1, \ldots, \alpha_n\) from left to right.
Let \(s_i = s_{\alpha_i}\). Then the Weyl group \(W\) is
generated by \(s_1, \ldots, s_n\) and the action
on an element of \(\chars(T)\) is described as follows:
for \(i \in \{1, \ldots, n-1\}\), \(s_i\) swaps any instances of \(L_i\)
and \(L_{i-1}\), while \(s_n\) changes the sign of any instance of \(L_n\).

We denote by \(\IGr{r}{2n}\) the Grassmannian of \(r\)-dimensional
isotropic subspaces. Similarly, let us write
\(\Iflag{r_1, \ldots, r_t}{2n}\) for the space of isotropic
flags where the subspaces have dimensions \(r_1 < \cdots < r_t\).
Note that this flag variety is the quotient \(G/P\) where \(P\)
is the parabolic given by crossing out the \(r_1, \ldots, r_t\)
vertices of the Dynkin diagram of \(G\).

\section{Homogeneous Roofs}\label{section:homog}

Our L-equivalent varieties come from a construction of Kanemitsu in \cite{kanemitsu:2018}
known as a homogeneous roof.

\begin{defn}
  A \emph{roof} of rank \(r\) is a Fano variety \(M\) of Picard number
  \(2\) and index \(r\) equipped with two different \(\P^{r-1}\)-bundle
  structures. A roof is \emph{homogeneous} if \(M\) is
  isomorphic to some \(G/Q\) with \(G\) a semisimple
  algebraic group and \(Q\) a parabolic subgroup.
\end{defn}

In \cite[\S 5.2]{kanemitsu:2018}, Kanemitsu provides a
complete list of homogeneous roofs which we
replicate here:
\begin{center}
    \begin{tabular}{| c | c | c | c |}
        Type & Simply Connected Group & Dynkin Diagram for \( Q \) & Rank \\
        \hline
             &&& \\
        \( A_r \times A_r \) & \( \SL{r+1} \times \SL{r+1} \) &
        \dynkin[edge length = 2em, labels={1,,,r}]A{x*.**} \dynkin[edge length
        = 2em, labels={1,,,r}]A{x*.**} & \( r+1 \) \\[2em]
        \( A_r^M \) & \( \SL{r+1} \) & \dynkin[edge length = 2em, labels={1,,,r}]A{x*.*x} & \( r
        \) \\[2em]
        \( A_{2r}^G  (r \geq 2)\) & \( \SL{2r+1} \) &
        \dynkin[edge length = 2em, labels={1,,r,r+1,,2r}] A{*.*xx*.*} & \( r+1 \) \\[2em]
        \( C_{3r-1} \) & \( \Sp{6r-2} \) &
        \dynkin[edge length = 2em, labels={1,,2r-1,2r,,,3r-1}] C{*.*xx*.**} & \( 2r \) \\
        \( D_r (r\geq 4) \) & \( \Spin{2r} \) & \dynkin[edge length = 2em,
        labels={1,,,r-1,r}]D{*.**xx} & \( r \) \\
        \( F_4 \) & \( F_4 \) & \dynkin[edge length = 2em] F{*xx*} & \( 3 \) \\[2em]
        \( G_2 \) & \( G_2 \) & \dynkin[edge length = 2em] G{xx} & \( 2 \) \\
    \end{tabular}
\end{center}

The classification comes from choosing a semisimple,
simply connected algebraic group \(G\) and a parabolic
subgroup \(Q\) obtained by crossing out two vertices
of the Dynkin diagram, ensuring that \(G/Q\) has Picard number \(2\).
If we then let \(P_1\) and \(P_2\) be the parabolics 
obtained by crossing out just one of these vertices,
we require that the contractions \(\map{q_i}{G/Q}{G/P_i}\)
have fibres isomorphic to projective spaces of the same dimension,
which forces \(G\) and \(Q\) to be one of the above.
We call the \(G/P_i\) the \emph{bases} of the roof.

Put \(M = G/Q\) and \(F_i = G/P_i\). Note \(\Pic(F_i) \cong \Z\).
Write \(\OO_i\) for \(\OO_{P_i}\) and take \(\mathcal{E}\)
to be the line bundle \(q_1^\ast\OO_1(1) \otimes q_2^\ast\OO_2(1)\).
Write \(\mathcal{E}_i = {q_i}_\ast\mathcal{E}\) and observe it is
a vector bundle by \Cref{rmk:bwb}.

\begin{exmp}[Roof of type \(C_{3r-1}\)]
  For the roof of type \(C_{3r-1}\), the group is \(G = \Sp{6r-2}\),
  the homogeneous roof is \(M = \Iflag{2r-1,2r}{6r-2}\) and the bases
  are \(F_1 = \IGr{2r-1}{6r-2}\) and \(F_2 = \IGr{2r}{6r-2}\). 

  For instance, when \(r=1\), we have \(M = \Iflag{1,2}{4}\),
  \(F_1 = \P^3\) and \(F_2 = \IGr{2}{4}\). This
  roof is closely related to the Abuaf flop which Segal
  showed leads to a derived equivalence in \cite{segal:2016}. 
  The construction of this roof can be seen in 
  \cite[Section 2.1]{segal:2016}.
\end{exmp}

\begin{prop}[{\cite[Lemma 4.4.3]{rampazzo:2021}}] \label[proposition]{prop:proj_nice}
  Let \(M, F_i, \mathcal{E}\) and \(\mathcal{E}_i\) be as above.
  Then \(\P(\mathcal{E}_i)\cong M\).
\end{prop}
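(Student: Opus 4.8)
The plan is to realise $M$ as the projectivisation of $\mathcal{E}_i$ by checking the universal property of $\P(\mathcal{E}_i)$ directly, exploiting that $q_i$ is a \(\P^{r-1}\)-bundle and that the line bundle \(\mathcal{E}\) restricts to the ample generator \(\OO(1)\) on each of its fibres. As a first step I would identify \(\mathcal{E}\) itself as a homogeneous line bundle. Writing \(c_1, c_2\) for the two crossed vertices, we have \(\OO_i(1) = E_{P_i}(\omega_{c_i})\), and since \(\omega_{c_i}\) is already a character of \(P_i\), pulling back along \(q_i\) gives \(q_i^\ast\OO_i(1) = E_Q(\omega_{c_i})\). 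Tensoring, \(\mathcal{E} = E_Q(\omega_{c_1} + \omega_{c_2})\). The weight \(\chi = \omega_{c_1} + \omega_{c_2}\) is dominant for \(Q\) and for each \(P_i\) (its coefficients at all uncrossed nodes, and at the node \(c_j\) with \(j \neq i\), are \(\geq 0\)), so \Cref{rmk:bwb} applies and yields \(\mathcal{E}_i = {q_i}_\ast\mathcal{E} \cong E_{P_i}(\chi)\) together with \(R^{j}{q_i}_\ast\mathcal{E} = 0\) for \(j > 0\).

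Next I would restrict to a fibre of \(q_i\). The fibre over the base point is \(P_i/Q\), which the roof condition identifies with \(\P^{r-1}\); under the Levi \(L_i\) of \(P_i\) this is the partial flag variety crossing the single remaining node \(c_j\). On this fibre the summand \(E_Q(\omega_{c_i})\) extends to a character of the whole of \(P_i\) and so restricts trivially, while \(E_Q(\omega_{c_j})\) restricts to the ample generator of \(\Pic(\P^{r-1})\). Hence \(\mathcal{E}\vert_{P_i/Q} \cong \OO_{\P^{r-1}}(1)\). Since \(H^1(\P^{r-1}, \OO(1)) = 0\), cohomology and base change (consistently with \Cref{rmk:bwb}) shows that \(\mathcal{E}_i\) is locally free of rank \(h^0(\P^{r-1}, \OO(1)) = r\) and that its formation commutes with base change, so its fibre over a point is canonically \(H^0(\P^{r-1}, \OO(1))\).

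For the final step I would use the counit \(q_i^\ast\mathcal{E}_i \to \mathcal{E}\). Because formation of the pushforward commutes with base change, on each fibre this is the evaluation map \(H^0(\P^{r-1}, \OO(1)) \otimes \OO \to \OO(1)\), which is surjective as \(\OO(1)\) is globally generated; by Nakayama the counit is therefore surjective on all of \(M\). A surjection \(q_i^\ast\mathcal{E}_i \twoheadrightarrow \mathcal{E}\) onto a line bundle induces, by the universal property of the projective bundle, a morphism \(\map{\phi}{M}{\P(\mathcal{E}_i)}\) over \(F_i\) with \(\phi^\ast\OO_{\P(\mathcal{E}_i)}(1) = \mathcal{E}\). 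On each fibre \(\phi\) is the morphism \(\P^{r-1} \to \P^{r-1}\) attached to the complete linear system of \(\OO(1)\), hence an isomorphism; a morphism over \(F_i\) that restricts to an isomorphism on every fibre is an isomorphism, giving \(\P(\mathcal{E}_i) \cong M\).

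I expect the main obstacle to be the second step: pinning down \(\mathcal{E}\vert_{P_i/Q}\) as exactly \(\OO(1)\) rather than a higher power or its dual. This rests on correctly reading off how the homogeneous line bundle \(E_Q(\chi)\) behaves on the fibre \(P_i/Q \cong \P^{r-1}\) — in particular that the fundamental weight \(\omega_{c_j}\) restricts to the Levi as the fundamental weight giving the generator of \(\Pic(\P^{r-1})\) — and on keeping track of the sign and ampleness convention fixed earlier for \(E_P(\chi)\). Everything after that is a standard application of cohomology and base change and the universal property of \(\P(\mathcal{E}_i)\).
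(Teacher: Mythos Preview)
The paper does not give its own proof here; it simply cites \cite[Lemma~4.4.3]{rampazzo:2021}. Your proposal supplies a complete and correct argument, and it is the standard one: identify \(\mathcal{E}=E_Q(\omega_{c_1}+\omega_{c_2})\), observe via \Cref{rmk:bwb} that \(\mathcal{E}_i=E_{P_i}(\omega_{c_1}+\omega_{c_2})\), check that \(\mathcal{E}\) restricts to \(\OO(1)\) on each fibre of \(q_i\), and then use the surjective counit \(q_i^\ast\mathcal{E}_i\to\mathcal{E}\) together with the universal property of \(\P(\mathcal{E}_i)\). This is exactly the kind of argument one expects for a statement of the form ``a \(\P^{r-1}\)-bundle carrying a relative \(\OO(1)\) is the projectivisation of the pushforward of that \(\OO(1)\)'', and is almost certainly what the cited reference does as well. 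Your caution about the sign/ampleness convention in the second step is well placed, but you have tracked the paper's convention \(E_P(\chi)=E_P(V_P(\chi)^\vee)\) correctly, so \(E_Q(\omega_{c_j})\) does restrict to the ample generator on the fibre.
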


\begin{remark}
  As a consequence of the above, for a roof of rank \(r\), the bundles \(\mathcal{E}_i\) have rank \(r\).
\end{remark}

Let \(\sigma \in H^0(M, \mathcal{E})\) be a general section and take
\(T = Z(\sigma)\). By \({q_i}_\ast\sigma\) we denote the image
of \(\sigma\) under the isomorphism \(H^0(M,\mathcal{E}) \to H^0(F_i, \mathcal{E}_i)\).
Put \(Z_i = Z({q_i}_\ast \sigma)\).

\begin{lemma}[{\cite[Lemma 4.1.6]{rampazzo:2021}}]\label[lemma]{lem:Z_calabi}
  If \(M\) is a roof of rank \(r\), not of type \(A_{r-1} \times A_{r-1}\),
  then \(Z_i\) is a smooth, connected, Calabi-Yau variety of codimension
  \(r\).
\end{lemma}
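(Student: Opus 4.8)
The plan is to realise \(Z_i\) as the zero scheme of a general section of a rank-\(r\) vector bundle \(\mathcal{E}_i\) on the rational homogeneous variety \(F_i\) that is simultaneously ample and globally generated, and then to read off the four assertions from a Bertini argument, one adjunction computation, and the Koszul resolution of \(\OO_{Z_i}\); throughout I would use that \(F_i\) is smooth, simply connected and satisfies \(H^j(F_i, \OO_{F_i}) = 0\) for \(j > 0\). To begin, let \(\alpha_a, \alpha_b\) be the two simple roots crossed out to form \(Q\), with \(P_1\) (respectively \(P_2\)) obtained by reinstating \(\alpha_b\) (respectively \(\alpha_a\)). Then \(q_1^\ast\OO_1(1) = E_Q(\omega_a)\) and \(q_2^\ast\OO_2(1) = E_Q(\omega_b)\), so \(\mathcal{E} = E_Q(\omega_a + \omega_b)\). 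Since \(\omega_a + \omega_b\) is dominant for \(G\), hence for both \(Q\) and \(P_i\), \Cref{rmk:bwb} gives \(\mathcal{E}_i = (q_i)_\ast\mathcal{E} = E_{P_i}(\omega_a + \omega_b)\). This weight has a positive coefficient on the unique simple root crossed out to form \(P_i\) and nonnegative coefficients elsewhere, so \cite[Corollary 8.5]{snow} shows \(\mathcal{E}_i\) is ample; being \(G\)-dominant, its evaluation map \(\dual{V_G(\omega_a + \omega_b)} \otimes \OO_{F_i} \to \mathcal{E}_i\) is surjective, so \(\mathcal{E}_i\) is globally generated.

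With \(\mathcal{E}_i\) globally generated, the Bertini-type theorem for zero loci of globally generated bundles---made effective here by Kleiman transversality applied to the transitive \(G\)-action on \(F_i\)---shows that the zero scheme \(Z_i\) of a general section \((q_i)_\ast\sigma\) is smooth of pure codimension \(\mathrm{rk}\,\mathcal{E}_i = r\). Ampleness forces \(c_r(\mathcal{E}_i) \neq 0\), so \(Z_i\) is non-empty as soon as \(r \leq \dim F_i\). This is exactly where type \(A_{r-1} \times A_{r-1}\) must be excluded: in that case \(F_i \cong \P^{r-1}\) and \(\mathcal{E}_i \cong \OO(1)^{\oplus r}\), so \(r > \dim F_i\) and a general section has empty zero locus; for every remaining type one checks \(\dim F_i \geq r\), so \(Z_i\) is a non-empty smooth variety of codimension \(r\).

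It remains to compute \(\omega_{Z_i}\) and the cohomology of \(\OO_{Z_i}\). Adjunction for the zero locus of a regular section identifies the conormal bundle with \(\dual{\mathcal{E}_i}\big|_{Z_i}\) and so gives \(\omega_{Z_i} \cong (\omega_{F_i} \otimes \det\mathcal{E}_i)\big|_{Z_i}\); I would in fact show \(\omega_{F_i} \otimes \det\mathcal{E}_i\) is already trivial on \(F_i\). The relative canonical bundle formula for \(q_i \colon \P(\mathcal{E}_i) = M \to F_i\), with \(\OO_{\P(\mathcal{E}_i)}(1) = \mathcal{E}\), reads
\begin{displaymath}
  \omega_M \cong q_i^\ast(\omega_{F_i} \otimes \det\mathcal{E}_i) \otimes \mathcal{E}^{-r}.
\end{displaymath}
Because the roof has index \(r\) with two symmetric \(\P^{r-1}\)-bundle structures, restricting \(\omega_M^{-1}\) to a fibre of \(q_i\) (on which \(\mathcal{E}\) restricts to \(\OO_{\P^{r-1}}(1)\) and \(q_i^\ast\OO_i(1)\) to \(\OO_{\P^{r-1}}\)) shows \(\omega_M^{-1} \cong \mathcal{E}^{r}\); substituting and using the injectivity of \(q_i^\ast\) on Picard groups yields \(\omega_{F_i} \otimes \det\mathcal{E}_i \cong \OO_{F_i}\), so \(\omega_{Z_i}\) is trivial. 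Connectedness, i.e. \(H^0(Z_i, \OO_{Z_i}) = \C\), and the remaining vanishing \(H^j(Z_i, \OO_{Z_i}) = 0\) for \(0 < j < \dim Z_i\) both follow from the hypercohomology spectral sequence of the Koszul resolution
\begin{displaymath}
  0 \to \wedge^r\dual{\mathcal{E}_i} \to \cdots \to \dual{\mathcal{E}_i} \to \OO_{F_i} \to \OO_{Z_i} \to 0,
\end{displaymath}
combined with \(H^j(F_i, \OO_{F_i}) = 0\) for \(j > 0\) and the vanishing of \(H^q(F_i, \wedge^p\dual{\mathcal{E}_i})\) in the relevant range, which holds since \(\mathcal{E}_i\) is ample.

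I expect the last cohomological input to be the main obstacle. Whereas the triviality of \(\omega_{Z_i}\) is a clean weight identity, establishing \(H^q(F_i, \wedge^p\dual{\mathcal{E}_i}) = 0\) throughout the required range is delicate: the bundles \(\wedge^p\dual{\mathcal{E}_i}\) are in general not irreducible as homogeneous bundles, so a direct Borel-Weil-Bott approach must first decompose them and then run Bott's algorithm uniformly across all the roof types, while the shortcut through a vanishing theorem for ample bundles (Le Potier or Griffiths) demands a careful check that its hypotheses are met. Confirming this vanishing, and with it both connectedness and the Calabi-Yau property, is where the real work lies.
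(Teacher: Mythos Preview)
The paper does not prove this lemma; it is quoted from \cite[Lemma 4.1.6]{rampazzo:2021} with no argument supplied, so there is nothing in the paper itself to compare your proposal against.

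That said, your outline is sound and matches the standard route. The identification \(\mathcal{E}_i = E_{P_i}(\omega_a + \omega_b)\), ampleness via \cite[Corollary 8.5]{snow}, global generation from \(G\)-dominance, Bertini/Kleiman for smoothness and codimension, the exclusion of type \(A_{r-1} \times A_{r-1}\) on dimensional grounds, and the adjunction computation (using both \(\P^{r-1}\)-bundle structures to pin down \(\omega_M^{-1} \cong \mathcal{E}^r\), hence \(\omega_{F_i} \otimes \det\mathcal{E}_i \cong \OO_{F_i}\)) are all correct. Your own diagnosis of the remaining obstacle is accurate as well: the vanishing \(H^q(F_i, \wedge^p\dual{\mathcal{E}}_i) = 0\) in the range needed for the Koszul spectral sequence does not follow from a single off-the-shelf theorem and is what requires the real work in the cited reference. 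One small economy worth noting: connectedness of \(Z_i\) already follows from a Lefschetz-type theorem for zero loci of sections of ample bundles, so only the intermediate vanishing \(H^j(Z_i,\OO_{Z_i}) = 0\) for \(0 < j < \dim Z_i\) genuinely needs the spectral-sequence argument.
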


\begin{remark}
  If \(M\) is of type \(A_r \times A_r\), then the \(Z_i\) are empty.
\end{remark}

In \cite[Definition 4.1.8]{rampazzo:2021}, Rampazzo calls
\((Z_1, Z_2)\) a \emph{Calabi-Yau pair} associated to \(M\).
It is these Calabi-Yau pairs that we will show to be
non-trivally L-equivalent in certain cases.

\begin{prop}[{\cite[\S 5.2]{rampazzo:2021}}] \label[proposition]{prop:the_relation}
  Let \(M\) be a homogeneous roof of rank \(r\) with bases \(F_1\) and \(F_2\).
  If \((Z_1, Z_2)\) is a Calabi-Yau pair associated to \(M\),
  then 
  \begin{displaymath}
    [\P^{r-2}]([F_2]-[F_1]) = \L^{r-1}([Z_1]-[Z_2]).
  \end{displaymath}
\end{prop}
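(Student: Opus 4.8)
The plan is to compute the class of $T = Z(\sigma)$ in $\GRVar{\C}$ in two different ways, once using the projection $q_1$ and once using $q_2$, and then equate the two answers. The first step is to fix conventions so that, under the isomorphism $\P(\mathcal{E}_i)\cong M$ of \Cref{prop:proj_nice}, the line bundle $\mathcal{E}$ is identified with the relative hyperplane bundle on $\P(\mathcal{E}_i)$ whose pushforward along $q_i$ is $\mathcal{E}_i = {q_i}_\ast\mathcal{E}$. Under this identification, $\sigma \in H^0(M,\mathcal{E})$ corresponds to ${q_i}_\ast\sigma \in H^0(F_i,\mathcal{E}_i)$, and I want to read off the fibres of $q_i$ restricted to $T$. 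Viewing a point of the fibre over $b \in F_i$ as a hyperplane $H \subset (\mathcal{E}_i)_b$, the section $\sigma$ vanishes there exactly when $({q_i}_\ast\sigma)(b)\in H$. Hence, when $({q_i}_\ast\sigma)(b)\neq 0$ the fibre of $q_i|_T$ is the space of hyperplanes through a fixed nonzero vector, a $\P^{r-2}$; while if $b \in Z_i = Z({q_i}_\ast\sigma)$, every hyperplane qualifies and the whole fibre $\P^{r-1}$ lies in $T$.

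The key step is to promote this pointwise picture to an identity in the Grothendieck ring. Over the open set $F_i \setminus Z_i$ the section ${q_i}_\ast\sigma$ is nowhere vanishing and spans a sub-line-bundle of $\mathcal{E}_i$, so $T$ restricts there to the projectivisation of the rank-$(r-1)$ quotient of $\mathcal{E}_i$ by this sub-line-bundle; this is a Zariski-locally trivial $\P^{r-2}$-bundle. Over $Z_i$ it is the full $\P^{r-1}$-bundle. Cutting and pasting therefore gives
\begin{displaymath}
  [T] = [\P^{r-2}]\bigl([F_i] - [Z_i]\bigr) + [\P^{r-1}][Z_i]
      = [\P^{r-2}][F_i] + \L^{r-1}[Z_i],
\end{displaymath}
where the second equality uses $[\P^{r-1}] - [\P^{r-2}] = \L^{r-1}$. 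Writing this out for $i=1$ and $i=2$ and equating the two expressions for $[T]$ yields
\begin{displaymath}
  [\P^{r-2}][F_1] + \L^{r-1}[Z_1] = [\P^{r-2}][F_2] + \L^{r-1}[Z_2],
\end{displaymath}
which rearranges to the claimed identity $[\P^{r-2}]([F_2]-[F_1]) = \L^{r-1}([Z_1]-[Z_2])$.

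I expect the main obstacle to be the very first step: pinning down the correct projective-bundle convention so that $\mathcal{E}$ is genuinely the relative $\OO(1)$ for $q_i$, and so that the incidence ``$\sigma$ vanishes at $H$ iff $({q_i}_\ast\sigma)(b)\in H$'' produces a $\P^{r-2}$ fibre over $F_i \setminus Z_i$ rather than its dual or a degenerate locus. Everything after that is the standard motivic relation for a projective bundle whose fibre dimension jumps along a closed subscheme, together with the elementary identity $[\P^{r-1}]-[\P^{r-2}]=\L^{r-1}$. I note that genericity of $\sigma$ is not needed for the class computation itself; it is used only to guarantee that the $Z_i$ are the smooth connected Calabi-Yau varieties of \Cref{lem:Z_calabi}.
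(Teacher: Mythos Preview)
Your proposal is correct and follows essentially the same approach as the paper: compute $[T]$ via each projection $q_i$, using that the fibres of $\overline{q}_i$ are $\P^{r-1}$ over $Z_i$ and $\P^{r-2}$ over $F_i\setminus Z_i$, then equate and rearrange with $[\P^{r-1}]-[\P^{r-2}]=\L^{r-1}$. The only difference is that you supply the argument for local triviality directly (the nowhere-vanishing section spans a sub-line-bundle whose quotient projectivises to a Zariski-locally trivial $\P^{r-2}$-bundle), whereas the paper simply cites Rampazzo for the piecewise triviality of the $\overline{q}_i$.
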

\begin{proof}
  As observed by Rampazzo in \cite[\S 5.2]{rampazzo:2021}, if we let
  \(\overline{q}_i\) be the restriction of \(q_i\) to \(T\) then
  \begin{displaymath}
    \overline{q}_i^{-1}(x) = 
    \begin{cases}
      \P^{r-1}, & \text{ if } x \in Z_i, \\
      \P^{r-2}, & \text{ if } x \in F_i \setminus Z_i.
    \end{cases}
  \end{displaymath}
  Moreover, the \(\overline{q}_i\) are piecewise trivial fibrations
  \cite[p. 84]{rampazzo:2021} and thus 
  \begin{displaymath}
    [T] = [\P^{r-1}][Z_i] + [\P^{r-2}][F_i\setminus Z_i].
  \end{displaymath}
  As \([F_i\setminus Z_i] = [F_i]-[Z_i]\), we get
  \begin{align*}
    [\P^{r-2}]([F_2]-[F_1]) &= [\P^{r-2}]([Z_2]-[Z_1]) + [\P^{r-1}]([Z_1]-[Z_2]) \\
                            &= ([\P^{r-1}] - [\P^{r-2}])([Z_1]-[Z_2]).
  \end{align*}
  However, since
  \begin{displaymath}
    [\P^r] = 1 + \L + \cdots +\L^r,
  \end{displaymath}
  we have \([\P^{r-1}] - [\P^{r-2}] = \L^{r-1}\). Hence,
  \begin{displaymath}
    [\P^{r-2}]([F_2]-[F_1]) = \L^{r-1}([Z_1]-[Z_2]).
  \end{displaymath}
\end{proof}

From this, if we show \([F_1] = [F_2]\) and \([Z_1] \neq [Z_2]\),
we'll have
\begin{displaymath}
  \L^{r-1}([Z_1]-[Z_2]) = 0,
\end{displaymath}
making \(Z_1\) and \(Z_2\) non-trivially L-equivalent. 

Note, if we have a roof of type \(A_r \times A_r, \, A_r^M, \, A_{2r}^G\)
or \(D_r\) then we automatically have \([F_1] = [F_2]\)
since the bases are isomorphic. In \cite{ito:2018},
it is shown that, in the type \(G_2\) case,
we also have \([F_1] = [F_2]\). We will prove
this is also true in the \(C_{3r-1}\) and \(F_4\) cases.

Also observe that if \(M\) is a roof of type \(A_r^M\)
then the \(F_i\) are both isomorphic to \(\P^r\)
and the \(Z_i\) have codimension \(r\).
Thus, the \(Z_i\) are a collection of points and hence
\([Z_i] = \#Z_i\). Therefore, we can only
have \(\L^r([Z_1]-[Z_2]) = 0\) if \([Z_1]=[Z_2]\),
that is roofs of this type do not yield non-trivial
L-equivalences.

For the other cases, in order to show \([Z_1] \neq [Z_2]\), we
will need a few lemmas. First, we must get a grasp
on the Picard groups of the Calabi-Yau pairs.

\begin{lemma}\label[lemma]{lem:pic_restrict}
  Let \(M\) be a homogeneous roof not of type \(A_r \times A_r, \, A_r^M, C_2\) or \( D_4 \).
  Let \(F_i\) be the bases and \((Z_1, Z_2)\) an associated Calabi-Yau
  pair. Then the inclusion map \(Z_i \hookrightarrow F_i \) induces
  an isomorphism on Picard groups, sending ample generator
  to ample generator.
\end{lemma}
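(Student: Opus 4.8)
The plan is to realise each \(Z_i\) as the smooth zero locus of a general section of an \emph{ample} vector bundle on \(F_i\), and then to invoke the Lefschetz hyperplane theorem for ample vector bundles together with the exponential sequence, so as to pass from a comparison of cohomology in low degrees to the desired comparison of Picard groups.

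First I would record that \(\mathcal{E}_i\) is ample. Write \(j_1, j_2\) for the two nodes crossed out to form \(Q\), so that \(P_i\) crosses out only \(j_i\) and \(\OO_i(1) = E_{P_i}(\omega_{j_i})\). From the definition \(\mathcal{E} = q_1^\ast\OO_1(1)\otimes q_2^\ast\OO_2(1)\), pulling back and tensoring gives \(\mathcal{E} = E_Q(\omega_{j_1}+\omega_{j_2})\). Since \(\omega_{j_1}+\omega_{j_2}\) is dominant for \(Q\) and for each \(P_i\), \Cref{rmk:bwb} yields \(\mathcal{E}_i = {q_i}_\ast\mathcal{E} = E_{P_i}(\omega_{j_1}+\omega_{j_2})\). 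This weight has non-negative coefficients and a strictly positive coefficient at the crossed-out node \(j_i\), so \(\mathcal{E}_i\) is ample by the criterion of \cite[Corollary 8.5]{snow}.

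Next I would apply the Lefschetz theorem. By \Cref{lem:Z_calabi}, for general \(\sigma\) the variety \(Z_i = Z({q_i}_\ast\sigma)\) is a smooth subvariety of \(F_i\) cut out by a regular section of the rank-\(r\) ample bundle \(\mathcal{E}_i\), so the restriction \(H^k(F_i,\Z)\to H^k(Z_i,\Z)\) is an isomorphism of Hodge structures for \(k < \dim Z_i\); provided \(\dim Z_i \geq 3\) this holds at \(k = 2\). On \(F_i\), Borel-Weil-Bott gives \(H^1(\OO) = H^2(\OO) = 0\) and \(H^2(F_i,\Z)\cong\Z\) is torsion-free, so \(c_1\) identifies \(\Pic(F_i)\) with \(H^2(F_i,\Z)\); transporting the Hodge structure across the Lefschetz isomorphism (together with \(H^1(F_i,\Z)=0\)) shows \(H^1(\OO_{Z_i}) = H^2(\OO_{Z_i}) = 0\), so \(c_1\) likewise identifies \(\Pic(Z_i)\) with \(H^2(Z_i,\Z)\). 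By functoriality of \(c_1\) the two restriction maps, on Picard groups and on \(H^2\), fit into a commuting square; three of its arrows are isomorphisms, forcing \(\Pic(F_i)\to\Pic(Z_i)\) to be one as well. Finally, the ample generator \(\OO_i(1)\) restricts to an ample class that generates \(\Pic(Z_i)\cong\Z\), hence to its ample generator.

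The main obstacle is securing \(\dim Z_i \geq 3\), which is exactly the condition distinguishing the excluded types. A dimension count gives \(\dim Z_i = 1\) in type \(C_2\) and \(\dim Z_i = 2\) in type \(D_4\), so the \(H^2\)-comparison genuinely fails there; in every other type, including the boundary cases \(G_2\) and \(A_4^G\) where \(\dim Z_i = 3\), one has \(\dim Z_i \geq 3\). For the symplectic types this follows from \(\dim\IGr{k}{2n} = k(2n-k) - \binom{k}{2}\) after subtracting the codimension \(r\), and the ordinary Grassmannian, orthogonal and \(F_4\) bases are handled by their standard dimensions.
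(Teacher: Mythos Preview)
Your proof is correct and follows essentially the same strategy as the paper: verify that \(\mathcal{E}_i\) is ample, apply a Lefschetz-type theorem for the zero locus of a section of an ample bundle, and check the dimensional constraint \(\dim Z_i \geq 3\) type by type. The only difference is in packaging: the paper quotes a ready-made Lefschetz statement for Picard groups from \cite[Example 7.1.5]{lazarsfeld:2004b}, whereas you unwind that statement yourself via the cohomological Lefschetz theorem, the exponential sequence, and compatibility of Hodge structures --- and you also spell out the ampleness verification that the paper leaves implicit.
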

\begin{proof}
  Let \( \map{\iota_i}{Z_i}{F_i}\) be the inclusion map.
  This induces a map \(\map{{\iota_i}^\ast}{\Pic(F_i)}{\Pic(Z_i)}\).
  Recall that \(Z_i\) is the zero locus of a general section
  of the ample vector bundle \(\mathcal{E}_i\). By Lefschetz, 
  if \(\dim F_i - \rank \mathcal{E}_i > 2\)
  then \(\iota_i^\ast\) is an isomorphism \cite[Example 7.1.5]{lazarsfeld:2004b}.
  Moreover, as \(\iota_i\) is a closed immersion, \(\iota_i^\ast\OO_{i}(1) = \OO_i(1)\vert_{Z_i}\)
  is the ample generator of \(\Pic(Z_i)\), which we denote by \(\OO_{Z_i}(1)\).
  By consulting the table below, we see the dimensional constraints
  are satisfied whenever \(M\) is not one of type \(A_r \times A_r\),
  \(A_r^M\), \(C_2\) or \(D_4\).

  \begin{center}
      \begin{tabular}{| c | c | c | c |}
        Type & \(\dim F_i\) & \(\rank \mathcal{E}_i\)\\
          \hline
              &&& \\
          \( A_r \times A_r \) & \(r\) & \(r+1\) \\[1em]
          \( A_r^M \) & \(r\) & \( r \) \\[1em]
          \( A_{2r}^G \) & \( r^2+r \) & \( r+1 \) \\[1em]
          \( C_{3r-1} \) & \( 6r^2-3r \) & \( 2r \) \\[1em]
          \( D_r \) & \( r(r-1)/2 \) & \( r \) \\[1em]
          \( F_4 \) & \( 20 \) & \( 3 \) \\[1em]
          \( G_2 \) & \( 5 \) & \( 2 \) \\
      \end{tabular}
  \end{center}
\end{proof}

This lemma will be useful in proving when \([Z_1]\neq[Z_2]\)
due to the following result of Liu and Sebag:

\begin{theorem}[{\cite[Corollary 5]{liu_sebag:2010}}]
  Let \(X\) and \(Y\) be smooth, connected, projective varieties.
  If \([X]=[Y]\), then \(X\) and \(Y\) are stably birational.
\end{theorem}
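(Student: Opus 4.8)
The plan is to factor the hypothesis \([X]=[Y]\) through the \emph{stable birational motivic measure} of Larsen and Lunts. Concretely, I would first recall that there is a ring homomorphism
\begin{displaymath}
  \Phi \colon \GRVar{\C} \longrightarrow \Z[\mathrm{SB}]
\end{displaymath}
where \(\Z[\mathrm{SB}]\) is the free abelian group on the set \(\mathrm{SB}\) of stable birational equivalence classes of smooth, connected, projective varieties over \(\C\) (with product induced by the product of varieties), and where \(\Phi\) sends the class \([V]\) of any smooth projective \(V\) to its stable birational class \(\langle V\rangle\) and sends \(\L\) to \(0\). Granting such a \(\Phi\), the corollary is immediate: applying \(\Phi\) to \([X]=[Y]\) gives \(\langle X\rangle = \langle Y\rangle\) in \(\Z[\mathrm{SB}]\); since \(X\) and \(Y\) are smooth, connected and projective, each is a single basis element of the free abelian group \(\Z[\mathrm{SB}]\), so equality of these two basis elements forces \(X\) and \(Y\) to represent the same class, that is, they are stably birational.

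The real content therefore lies in constructing \(\Phi\), and this is the step I expect to be the main obstacle. Two deep geometric inputs are required, both available because we work over \(\C\). First, by resolution of singularities together with the cut-and-paste relations, every class in \(\GRVar{\C}\) is a \(\Z\)-linear combination of classes of smooth projective varieties; equivalently, \(\GRVar{\C}\) admits Bittner's presentation with generators the smooth projective varieties and relations coming from blow-ups along smooth centres. Second, to see that the assignment \([V] \mapsto \langle V\rangle \bmod \L\) respects these blow-up relations, one uses the weak factorization theorem: if \(\mathrm{Bl}_W V \to V\) is the blow-up of a smooth variety along a smooth centre \(W\), then its exceptional divisor is a projective bundle over \(W\), so \([\mathrm{Bl}_W V] - [V]\) lies in the ideal \((\L)\), while \(\mathrm{Bl}_W V\) and \(V\) are birational and hence stably birational. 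Checking that these assignments are compatible and multiplicative is then a formal verification.

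Finally, I would package the output as a quotient statement: the homomorphism \(\Phi\) descends to an isomorphism \(\GRVar{\C}/(\L) \cong \Z[\mathrm{SB}]\), which makes transparent both that \(\L \in \ker\Phi\) and that \(\Phi\) detects stable birational type on smooth projective classes. The only point needing care beyond the construction of \(\Phi\) is the observation that a smooth, connected, projective variety maps to a single generator rather than a \(\Z\)-combination of several; this is immediate since such a variety is already one of the distinguished generators, but it is exactly the reason the hypotheses of smoothness, connectedness and projectivity cannot be dropped.
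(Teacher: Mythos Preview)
The paper does not supply its own proof of this statement: it is quoted as Corollary~5 of Liu--Sebag and immediately used as a black box in the proof of the subsequent corollary. There is therefore no in-paper argument to compare your proposal against.

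Your proof is correct and is essentially the original argument of Larsen and Lunts. One small expository point: weak factorization is the ingredient that goes into \emph{Bittner's presentation} of \(\GRVar{\C}\) itself (establishing that smooth projective classes together with the blow-up relations suffice to present the ring); once that presentation is granted, verifying that \(\Phi\) respects the blow-up relation \([\mathrm{Bl}_W V] - [E] = [V] - [W]\) needs only that \(\mathrm{Bl}_W V\) is birational to \(V\) and that the exceptional divisor \(E\), being a projective bundle over \(W\), is stably birational to \(W\). Your second paragraph reads as though weak factorization is invoked a second time at this verification step, which it is not. Apart from that the logic is sound, including the closing observation that connectedness is what guarantees \(\Phi([X])\) is a single basis element of \(\Z[\mathrm{SB}]\) rather than a sum.
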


\begin{cor}\label[corollary]{cor:equals_iso}
  Let \(M\) be a homogeneous roof not of type \(A_r \times A_r, \, A_r^M, \) or
  \(D_4\). If \((Z_1, Z_2)\) is a Calabi-Yau pair associated
  to \(M\), then \([Z_1] = [Z_2]\) if and only if \(Z_1 \cong Z_2\).
\end{cor}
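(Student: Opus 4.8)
The implication \(Z_1 \cong Z_2 \Rightarrow [Z_1] = [Z_2]\) is immediate from the first defining relation of \(\GRVar{\C}\), so the content is the converse, and the plan is to show that \([Z_1] = [Z_2]\) forces \(Z_1 \cong Z_2\). By \Cref{lem:Z_calabi} both \(Z_i\) are smooth, connected, projective Calabi-Yau varieties, so the theorem of Liu and Sebag \cite[Corollary 5]{liu_sebag:2010} applies and yields that \(Z_1\) and \(Z_2\) are stably birational. I would then upgrade this to a genuine isomorphism in two stages.

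First I would pass from stable birationality to birationality. Since each \(Z_i\) has trivial canonical bundle it is not uniruled, so for any \(m\) the projection \(Z_i \times \P^m \to Z_i\) is the maximal rationally connected fibration of \(Z_i \times \P^m\). The base of this fibration is a birational invariant, so a birational equivalence between \(Z_1 \times \P^m\) and \(Z_2 \times \P^n\) descends to one between \(Z_1\) and \(Z_2\); hence the \(Z_i\) are birational.

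Second, and this is where \Cref{lem:pic_restrict} enters, I would argue that a birational equivalence between the \(Z_i\) is actually an isomorphism whenever \(M\) is not of type \(C_2\). In those cases \Cref{lem:pic_restrict} gives \(\Pic(Z_i) \cong \Z\), generated by the ample class \(\OO_{Z_i}(1)\). As \(K_{Z_i} = 0\), the \(Z_i\) are minimal models and any birational map between them is an isomorphism in codimension one; write \(\psi \colon U_1 \to U_2\) for the resulting isomorphism of open sets whose complements \(Z_i \setminus U_i\) have codimension at least two. Since \(Z_i\) is smooth, restriction identifies \(\Pic(Z_i)\) with \(\Pic(U_i)\), so \(\psi^\ast\!\left(\OO_{Z_2}(1)|_{U_2}\right)\) extends to a generator of \(\Pic(Z_1) \cong \Z\); being the pullback of an effective ample class it is forced to be the positive generator \(\OO_{Z_1}(1)\). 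Pulling back sections and extending them across the codimension-two locus then gives an isomorphism of section rings \(\bigoplus_{m \geq 0} H^0(Z_2, \OO_{Z_2}(m)) \cong \bigoplus_{m \geq 0} H^0(Z_1, \OO_{Z_1}(m))\), and applying \(\operatorname{Proj}\) recovers \(Z_1 \cong Z_2\) because \(\OO_{Z_i}(1)\) is ample.

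The type \(C_2\) case, which \Cref{cor:equals_iso} includes but \Cref{lem:pic_restrict} excludes, I would dispatch by hand: there \(\dim Z_i = 1\), so the \(Z_i\) are smooth projective (elliptic) curves, and a birational equivalence between smooth projective curves is automatically an isomorphism. I expect the genuine obstacle to be the last stage for the higher-dimensional cases, namely ruling out a nontrivial birational modification (a flop) between the two Calabi-Yau varieties; the whole point is that the Picard rank one conclusion of \Cref{lem:pic_restrict} leaves no room for such a modification, which is exactly why the argument collapses in type \(C_2\) and has to be replaced by the low-dimensional curve input.
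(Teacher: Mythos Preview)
Your proposal is correct and follows essentially the same three-step outline as the paper's proof: Liu--Sebag gives stable birationality, Calabi--Yau forces genuine birationality, and Picard rank one (via \Cref{lem:pic_restrict}) upgrades this to an isomorphism, with the \(C_2\) case handled separately. The only difference is packaging: where the paper cites \cite[Corollary~1]{liu_sebag:2010} and \cite[\S1.1]{chen:2013} for the second and third steps, you supply direct arguments (the MRC fibration for the former, the section-ring/Proj argument for the latter), and for \(C_2\) you invoke ``birational smooth curves are isomorphic'' rather than the paper's observation that elliptic curves still have Picard number one.
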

\begin{proof}
  By the theorem, if \([Z_1] = [Z_2]\) then \(Z_1\) and \(Z_2\) are
  stably birational. However, the \(Z_i\) are Calabi-Yau
  and of the same dimension, so they are stably birational
  if and only if they are birational \cite[Corollary 1]{liu_sebag:2010}.
  If we show that the Picard number of both \(Z_i\) is \(1\), this
  combined with the fact that they are Calabi-Yau tells us that
  \(Z_1\) and \(Z_2\) are birational if and only if they are isomorphic 
  \cite[\S 1.1]{chen:2013}. Thus, to prove the result, we need
  only show the Picard number of both the \(Z_i\) is \(1\).

  This follows immediately from \Cref{lem:pic_restrict} for
  all cases under consideration except when \(M\) is of type
  \(C_2\). However, in this case, by \Cref{lem:Z_calabi},
  the \(Z_i\) are Calabi-Yau curves and hence have Picard number \(1\).
\end{proof}

With this in hand, we will prove that \Cref{prop:the_relation}
yields a non-trivial L-equivalence for roofs of type \(C_{3r-1}\) with \(r \geq 2\)
and the roof of type \(F_4\).

\section{L-equivalences via Symplectic Grassmannians}\label{sec:symp}

Fix \(r \geq 2\) and put \(G = \Sp{6r-2}\). Let \(Q\) be the parabolic
obtained by omitting the simple roots \(\alpha_{2r-1}\) and \(\alpha_{2r}\).
Then \(G/Q\) is a homogeneous roof of rank \(2r\) and type \(C_{3r-1}\).
Let \(P_1\) be the parabolic obtained by omitting \(\alpha_{2r-1}\)
and \(P_2\) from omitting \(\alpha_{2r}\). Then put
\begin{align*}
  M &= G/Q = \Iflag{2r-1, 2r}{6r-2}, \\
  F_1 &= G/P_1 = \IGr{2r-1}{6r-2}, \\
  F_2 &= G/P_2 = \IGr{2r}{6r-2}.
\end{align*}
The maps \(\map{q_i}{M}{F_i}\) give \(M\) the structure of
a homogeneous roof.

Write \(\OO_i\) for \(\OO_{P_i}\). Put \(\mathcal{E} = q_1^\ast \OO_1(1) \otimes q_2^\ast \OO_2(1)\)
and \(\mathcal{E}_i = {q_i}_\ast \mathcal{E}\).
Note that \(\mathcal{E}_i = E_{P_i}(\omega_{2r-1}+\omega_{2r})\) by
\Cref{rmk:bwb}.
They are rank \(2r\) bundles and \(M \cong \P(\mathcal{E}_i)\)
by \Cref{prop:proj_nice}.

Let \((Z_1, Z_2)\) be a Calabi-Yau pair associated with \(M\).
By \Cref{prop:the_relation},
we have
\begin{displaymath}
  [\P^{2r-2}]([F_2]-[F_1]) = \L^{2r-1}([Z_1]-[Z_2]).
\end{displaymath}
Our aim is to show that \([F_1] = [F_2]\) and \([Z_1] \neq [Z_2]\), hence
showing that \(Z_1\) and \(Z_2\) are non-trivally L-equivalent.

\begin{lemma}\label[lemma]{lem:equal_fano}
  We have \([F_1] = [F_2]\).
\end{lemma}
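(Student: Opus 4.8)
The plan is to realise each class \([F_i]\) as an explicit polynomial in \(\L\) by means of the cellular decomposition recalled in the preliminaries, and then to show the two polynomials coincide. Writing \(I_1 = \mathcal{S}\setminus\{\alpha_{2r-1}\}\) and \(I_2 = \mathcal{S}\setminus\{\alpha_{2r}\}\), the decomposition \(G/P_{I_i} = \bigcup_{w\in W^{I_i}} Y(w)\) with \(Y(w)\cong\A^{\ell(w)}\), together with the fact that the class of a variety with a cellular decomposition is the sum of the classes of its cells, gives \([F_i] = \sum_{w\in W^{I_i}}\L^{\ell(w)}\). Hence it suffices to prove that the two length generating functions \(\sum_{w\in W^{I_1}} t^{\ell(w)}\) and \(\sum_{w\in W^{I_2}} t^{\ell(w)}\) are equal as polynomials in \(t\); evaluating at \(t=\L\) then yields \([F_1]=[F_2]\).

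To compute these generating functions I would use the standard fact that every \(w\in W\) factors uniquely as \(w = uv\) with \(u\in W^{I}\) and \(v\in W_{I}\), and that this factorisation is length-additive, \(\ell(w)=\ell(u)+\ell(v)\). Consequently the full Poincar\'e polynomial factors as \(\sum_{w\in W} t^{\ell(w)} = \left(\sum_{u\in W^{I}} t^{\ell(u)}\right)\left(\sum_{v\in W_{I}} t^{\ell(v)}\right)\). Since both \(F_1\) and \(F_2\) arise from the same ambient Weyl group \(W=W(C_{3r-1})\), the factor \(\sum_{w\in W} t^{\ell(w)}\) is common to both, so the desired identity \(\sum_{w\in W^{I_1}} t^{\ell(w)} = \sum_{w\in W^{I_2}} t^{\ell(w)}\) reduces to showing that the parabolic subgroups \(W_{I_1}\) and \(W_{I_2}\) have the same Poincar\'e polynomial.

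Now \(W_{I_i}\) is the Weyl group of the Levi obtained by deleting one node of the type-\(C_{3r-1}\) diagram. Deleting \(\alpha_{2r-1}\) splits the diagram as \(A_{2r-2}\sqcup C_{r}\), while deleting \(\alpha_{2r}\) splits it as \(A_{2r-1}\sqcup C_{r-1}\); one reads these off directly from the Dynkin diagram, noting that the long node \(\alpha_{3r-1}\) always lies in the type-\(C\) component. Writing \([m]_t = 1+t+\cdots+t^{m-1}\), the Poincar\'e polynomials of the irreducible factors are \(\prod_{i=1}^{m}[i]_t\) for \(A_{m-1}\) and \(\prod_{i=1}^{m}[2i]_t\) for \(C_m\). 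Thus \(\sum_{v\in W_{I_1}} t^{\ell(v)} = \left(\prod_{i=1}^{2r-1}[i]_t\right)\left(\prod_{i=1}^{r}[2i]_t\right)\) and \(\sum_{v\in W_{I_2}} t^{\ell(v)} = \left(\prod_{i=1}^{2r}[i]_t\right)\left(\prod_{i=1}^{r-1}[2i]_t\right)\), and these agree because each equals \([2r]_t\left(\prod_{i=1}^{2r-1}[i]_t\right)\left(\prod_{i=1}^{r-1}[2i]_t\right)\), using \(\prod_{i=1}^{2r}[i]_t = [2r]_t\prod_{i=1}^{2r-1}[i]_t\) and \(\prod_{i=1}^{r}[2i]_t = [2r]_t\prod_{i=1}^{r-1}[2i]_t\). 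This gives \([F_1]=[F_2]\).

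The manipulation of products in the last step is routine; the one place requiring care is the correct identification of the parabolic sub-root-systems obtained by deleting \(\alpha_{2r-1}\) and \(\alpha_{2r}\), and in particular tracking which component retains the long root and is therefore of type \(C\) rather than \(A\). Everything else is formal. (As an alternative to the Weyl-group bookkeeping, one could instead count \(\F_q\)-points of \(\IGr{2r-1}{6r-2}\) and \(\IGr{2r}{6r-2}\) via Gaussian binomial coefficients; the cellular decomposition identifies this point count with the same polynomial in \(\L\), and the identity \(\binom{3r-1}{2r-1}_q\prod_{j=r+1}^{3r-1}(q^j+1) = \binom{3r-1}{2r}_q\prod_{j=r}^{3r-1}(q^j+1)\) reduces, after the factorisation \(q^{2r}-1=(q^r-1)(q^r+1)\), to the same cancellation.)
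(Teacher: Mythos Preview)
Your proof is correct. Both you and the paper start from the cellular decomposition of \(F_i\), so that \([F_i]\) is a polynomial in \(\L\) whose coefficients are the even Betti numbers; from that point on the arguments diverge. The paper computes those Betti numbers by counting \(\F_q\)-points of \(\IGr{d}{2n}\) directly and invoking the Weil conjectures, obtaining \(\#F_1(\F_q)=\#F_2(\F_q)\) from the identity \(\frac{q^{2r}-1}{q^{2r}-1}=1\) in the product formula. You instead stay entirely inside the combinatorics of \(W(C_{3r-1})\): using the length-additive factorisation \(W=W^{I}W_{I}\) you reduce to equality of the Poincar\'e polynomials of the Levi Weyl groups \(W(A_{2r-2})\times W(C_r)\) and \(W(A_{2r-1})\times W(C_{r-1})\), which follows from the common factor \([2r]_t\). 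Your route is arguably more elementary, since the cellular decomposition already gives \([F_i]\) as a polynomial in \(\L\) and there is no real need to pass through point counts or the Weil conjectures; the paper's route has the advantage of giving a closed product formula for the class without any Weyl-group bookkeeping. You correctly flag the one subtle point, namely identifying which component of the sub-root-system carries the long root, and your parenthetical remark about the \(\F_q\)-point alternative is exactly the paper's argument.
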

\begin{proof}
  Due to the cellular decomposition of \(F_i\), we know
  \begin{displaymath}
    [F_i] = \sum_{j=0}^{\infty} b_{2j}(F_i) \L^j,
  \end{displaymath}
  where \(b_{j}(F_i)\) is the \(j\)th Betti number of \(F_i\).
  We will show the Betti numbers for \(F_1\) and \(F_2\)
  are equal using the Weil conjectures.

  Put \(X = \IGr{d}{2n}\). 
  Let us count \(\# X(\F_q)\). A \(d\)-dimensional isotropic subspace of \(\F_q^{2n}\) is given by a choice of \(d\) basis vectors.
  The first of these can be any non-zero vector and so we have \(q^{2n}-1\) choices. The second vector must be in the
  symplectic complement of the first and not a multiple of the first. This gives \(q^{2n-1} - q\) choices.
  Repeating this, we see that there are
  \begin{displaymath}
    \prod_{i=1}^d (q^{2n -i + 1} - q^{i-1}) = q^{\frac{d(d-1)}{2}} \prod_{i=1}^d (q^{2(n-i) + 2}-1)
  \end{displaymath}
  sets of \(d\) vectors which yield a \(d\)-dimensional isotropic subspace.
  Some of these will result in the same subspace, and so we must count the number of possible bases of a \(d\)-dimensional isotropic subspace.
  There are \(q^d-1\) choices for the first basis vector, then the second must not lie in the span of the first, so there are \(q^d-q\) choices
  for the second basis vector and so on. Hence, the number of possible bases for a given subspace is
  \begin{displaymath}
    \prod_{i=1}^d (q^d - q^{i-1}) = q^{\frac{d(d-1)}{2}} \prod_{i=1}^d (q^i - 1).
  \end{displaymath}
  It follows that
  \begin{displaymath}
    \# X(\F_q) = \prod_{j=1}^d \frac{q^{2(n-j+1)}-1}{q^j-1}.
  \end{displaymath}
  Hence,
  \begin{align*}
    \# F_2(\F_q) &= \prod_{j=1}^{2r} \frac{q^{2(3r-j)}-1}{q^j-1} \\
                 &= \frac{q^{2r}-1}{q^{2r}-1} \prod_{j=1}^{2r-1} \frac{q^{2(3r-j)-1}}{q^j-1} \\
                 &= \# F_1(\F_q).
  \end{align*}
  
  By the Weil conjectures, \([F_1]=[F_2]\).
\end{proof}

Up to some cohomology calculations, which will be postponed
to the next section, we are now able to prove \Cref{th:type_c_zero}.

\begin{proof}[Proof of \Cref{th:type_c_zero}]
  We have the identity
  \begin{displaymath}
    [\P^{2r-2}]([F_2]-[F_1]) = \L^{2r-1}([Z_1]-[Z_2]).
  \end{displaymath}
  By \Cref{lem:equal_fano}, this simplifies to
  \begin{displaymath}
    \L^{2r-1}([Z_1]-[Z_2]) = 0.
  \end{displaymath}
  For this to give us a non-trivial L-equivalence, we 
  need \([Z_1] \neq [Z_2]\) but, by \Cref{cor:equals_iso}, it
  is enough to show \(Z_1\) and \(Z_2\) are not isomorphic. As
  any isomorphism would send \(\OO_{Z_1}(1)\) to \(\OO_{Z_2}(1)\),
  the fact that \(H^\ast(Z_1, \OO_{Z_1}(1))\) and \(H^\ast(Z_2, \OO_{Z_2}(1))\)
  are not isomorphic by \Cref{lem:cohoms_differ} completes the proof.
\end{proof}

\subsection{Cohomology Calculations}

\begin{lemma}\label[lemma]{lem:calc_one}
  For \(p \in \{1, \ldots, 2r\}\), we have \(H^\ast(F_1, \wedge^p \dual{\mathcal{E}}_1 \otimes \OO_1(1))=0\).
\end{lemma}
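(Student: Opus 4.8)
The plan is to compute these groups directly with Borel--Weil--Bott, decomposing the bundle into irreducible homogeneous bundles and showing that each summand has a \emph{singular} infinitesimal character, so that case (2) of the theorem forces every cohomology group to vanish.

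First I would pin down the relevant \(P_1\)-representation. Crossing out \(\alpha_{2r-1}\) leaves a Levi subgroup whose semisimple part has type \(A_{2r-2}\times C_r\): the \(A_{2r-2}\)-factor acts on the coordinates \(L_1,\ldots,L_{2r-1}\) and the \(C_r\)-factor acts on \(L_{2r},\ldots,L_{3r-1}\) as the symplectic group \(\Sp{2r}\). Since \(\mathcal{E}_1 = E_{P_1}(\omega_{2r-1}+\omega_{2r})\), the dual bundle \(\dual{\mathcal{E}}_1\) is associated to \(V_{P_1}(\omega_{2r-1}+\omega_{2r})\); writing \(\omega_{2r-1}+\omega_{2r}=(2^{2r-1},1,0^{r-1})\) in the \(L_i\)-coordinates shows that, as a Levi-representation, this is the character \(2\omega_{2r-1}\) (trivial on the \(A\)-factor) tensored with the standard \(2r\)-dimensional representation \(S\) of the \(\Sp{2r}\)-factor, consistent with \(\rank\dual{\mathcal{E}}_1 = 2r\).

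Next I would decompose the bundle. Because \(S\) is self-dual and \(\OO_1(1)=E_{P_1}(\omega_{2r-1})\), taking the \(p\)-th exterior power and twisting reduces, after restricting to the \(\Sp{2r}\)-factor, to the classical decomposition \(\wedge^p S = \bigoplus_k V_{\omega_k}\) of exterior powers of the standard symplectic representation into fundamental representations, where \(k\) runs over \(\{p,p-2,p-4,\ldots\}\cap\{0,1,\ldots,r\}\) (for \(p>r\) one first uses the self-duality \(\wedge^p S\cong\wedge^{2r-p}S\), so \(k\le r\) throughout). As the unipotent radical acts trivially, this is a genuine decomposition into irreducible homogeneous bundles, each of the form \(E_{P_1}(\chi_{p,k})\) with highest weight \(\chi_{p,k}=((1-2p)^{2r-1},1^{k},0^{r-k})\); the sign in the first block reflects our convention \(E_{P_1}(\chi)=E_{P_1}(\dual{V_{P_1}(\chi)})\).

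Finally I would feed each summand to Borel--Weil--Bott. With \(\rho=(3r-1,3r-2,\ldots,1)\), the weight \(\chi_{p,k}+\rho\) organises into three blocks of consecutive integers: the first \(2r-1\) coordinates form the run from \(3r-2p\) down to \(r+2-2p\), while the last \(r\) coordinates together fill \(\{1,\ldots,r+1\}\setminus\{r-k+1\}\). For \(C_{3r-1}\) a weight contributes cohomology only if \(\chi+\rho\) is regular, i.e.\ its coordinates have pairwise distinct, nonzero absolute values, and I would show this always fails for \(p\in\{1,\ldots,2r\}\). The crux is a short case analysis according to whether the first block is entirely positive, entirely negative, or meets \(0\): since that block is a run of \(2r-1\) consecutive integers while the remaining blocks omit only the single value \(r-k+1\) from \(\{1,\ldots,r+1\}\), the first block must always either contain \(0\), contain a \(\pm m\) pair, or share an absolute value with the other two blocks. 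Hence every \(\chi_{p,k}+\rho\) is singular, every summand is acyclic, and the claim follows. I expect this uniform singularity check to be the main obstacle: one must track the long, \(p\)-dependent block against the fixed block \(\{1,\ldots,r+1\}\setminus\{r-k+1\}\) across the entire range of \(p\) and all admissible \(k\), with the decomposition of \(\wedge^p S\) for \(p>r\) via symplectic self-duality being the one point needing care; once the three-block description is in hand, the collision argument is elementary bookkeeping.
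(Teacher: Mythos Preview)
Your approach matches the paper's: both identify the \(P_1\)-dominant highest weights occurring in \(\wedge^p\dual{\mathcal{E}}_1\otimes\OO_1(1)\) as \(\mu'_t=((1-2p)^{2r-1},1^t,0^{r-t})\) and then show each \(\mu'_t+\rho\) is singular, so Borel--Weil--Bott kills every summand. The paper streamlines two of your steps: it never pins down the exact decomposition of \(\wedge^p S\) (it only lists the \emph{possible} dominant summands, which sidesteps the mild imprecision in your index set for \(p>r\)), and it replaces your three-case collision analysis by the single pigeonhole observation that every coordinate of \(\mu'_t+\rho\) has absolute value at most \(3r-2<3r-1\), which already forces singularity for a \(C_{3r-1}\)-weight.
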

\begin{proof}
  Recall that
  \begin{displaymath}
    \mathcal{E}_1 = E_{P_1}\left(2\sum_{j=1}^{2r-1}L_j + L_{2r}\right).
  \end{displaymath}
  Thus,
  \begin{displaymath}
    \dual{\mathcal{E}}_1 = E_{P_1}(V_{P_1}(\lambda)),
  \end{displaymath}
  where \(\lambda = 2\sum_{j=1}^{2r-1}L_j + L_{2r}\).
  Put \(V = V_{P_1}(\lambda)\).
  Note that the weights of \(\wedge^p V\)
  are the \(p\)-fold sums of the weights of \(V\)
  so let us calculate them.
  The Weyl group \(W_1\) of \(P_1\) is the subgroup of the Weyl group \(W\)
  of \(G\) generated by \(\{s_1, \ldots, s_{3r-1}\} \setminus \{s_{2r-1}\}\).
  By letting \(W_1\) act on \(\lambda\), we see the extremal
  weights of \(V\) are of the form
  \begin{displaymath}
    2\sum_{j=1}^{2r-1}L_j \pm L_{2r+k}
  \end{displaymath}
  for some \(k \in \{0, \ldots, r-1\}\).
  There are \(2r\) of these, the same as the dimension
  of \(V\), so these are all the weights.

  Hence, we deduce that the only weights of \(\wedge^p V\)
  which are dominant for \(P_1\) are of the form
  \begin{displaymath}
    \mu_0 = 2p\sum_{j=1}^{2r-1}L_j \text{ or } \mu_t = 2p\sum_{j=1}^{2r-1}L_j + \sum_{j=1}^t L_{2r-1+j}.
  \end{displaymath}
  As the unipotent radical of \(P\) acts trivially on \(V\),
  we should be able to take some weights \(\mu_{t_1}, \ldots, \mu_{t_s}\)
  of this form and write
  \begin{displaymath}
    \wedge^p V = V_{P_1}(\mu_{t_1}) \oplus \cdots \oplus V_{P_1}(\mu_{t_s}).
  \end{displaymath}
  Hence, \(\wedge^p \dual{\mathcal{E}}_1 = E_{P_1}(-w_0(\mu_{t_1})) \oplus \cdots \oplus E_{P_1}(-w_0(\mu_{t_s}))\),
  where \(w_0\) is the longest element of \(W_1\). Since \(W_1\)
  is isomorphic to \(S_{2r-1} \times W(\Sp{2r})\), its longest element
  swaps \(L_j\) and \(L_{2r-j}\) for \(j \in \{1, \ldots, 2r-1\}\)
  and sends \(L_j\) to \(-L_j\) for \(j \in \{2r, \ldots, 3r-1\}\).
  Thus, 
  \begin{displaymath}
    -w_0(\mu_t) = - 2p\sum_{j=1}^{2r-1} L_j + \sum_{j=1}^t L_{2r-1+j}.
  \end{displaymath}
  So, if we let
  \begin{displaymath}
    \mu^\prime_t = (1-2p)\sum_{j=1}^{2r-1} L_j + \sum_{j=1}^t L_{2r-1+j},
  \end{displaymath}
  then we see 
  \begin{displaymath}
    \wedge^p \dual{\mathcal{E}}_1 \otimes \OO_1(1) 
    = E_{P_1}(\mu^\prime_{t_1}) \oplus \cdots \oplus E_{P_1}(\mu^\prime_{t_s}).
  \end{displaymath}
  This allows us to calculate its cohomology using Borel-Weil-Bott.
  We claim that, for any \(t\), there is no \(w\in W\) such that
  \(w \bullet \mu^\prime_t\) is dominant for \(G\), which will prove the
  cohomology vanishes. 
  Let \(\rho\) be the sum of
  the fundamental weights and recall
  \begin{displaymath}
    \rho = \sum_{t=1}^{3r-1} (3r-2 + j)L_j.
  \end{displaymath}
  For \(w \bullet \mu^\prime_t\) to be dominant, when writing
  it in the \(L_j\) basis, it can't have any negative coefficients.
  Since the action of \(w\) permutes the \(L_j\) and changes
  some of their signs, to not have negative coefficients in \(w \bullet \mu_t^\prime\),
  there must be a coefficient of some \(L_j\) in
  \(\mu^\prime_t + \rho\) with absolute value at least \(3r-1\).
  A straightforward calculation shows this is not possible if \(p\in\{1, \ldots,2r\}\).
  It follows that \(H^\ast(F_1, \wedge^p \dual{\mathcal{E}}_1 \otimes \OO_1(1)) = 0\).
\end{proof}

\begin{cor}\label[corollary]{cor:dim_Z1}
  We have \(\dim H^0(Z_1, \OO_{Z_1}(1)) = \dim V_{G}(\omega_{2r-1})\)
  while all other cohomologies vanish.
\end{cor}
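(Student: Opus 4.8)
The strategy is to resolve $\OO_{Z_1}(1)$ by a Koszul complex built from $\mathcal{E}_1$, apply the vanishing of \Cref{lem:calc_one} term by term, and thereby reduce the computation on $Z_1$ to one on $F_1$ that Borel-Weil-Bott handles directly.

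First I would set up the Koszul resolution. Since $Z_1 = Z(q_{1\ast}\sigma)$ is the zero locus of a general section of the rank $2r$ bundle $\mathcal{E}_1$ on $F_1$, and since \Cref{lem:Z_calabi} tells us $Z_1$ has codimension $2r = \rank \mathcal{E}_1$, the section is regular. Writing $\iota_1$ for the inclusion $Z_1 \hookrightarrow F_1$, the Koszul complex
\begin{displaymath}
  0 \to \wedge^{2r}\dual{\mathcal{E}}_1 \to \cdots \to \dual{\mathcal{E}}_1 \to \OO_{F_1} \to \iota_{1\ast}\OO_{Z_1} \to 0
\end{displaymath}
is therefore exact. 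By \Cref{lem:pic_restrict} we have $\OO_{Z_1}(1) = \iota_1^\ast\OO_1(1)$, so tensoring by the line bundle $\OO_1(1)$ yields a resolution of $\iota_{1\ast}\OO_{Z_1}(1)$ whose terms are exactly $\wedge^p\dual{\mathcal{E}}_1 \otimes \OO_1(1)$ for $p = 0, 1, \ldots, 2r$.

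Next I would pass to cohomology via the hypercohomology spectral sequence of this resolution (equivalently, by splitting it into short exact sequences and chasing the associated long exact sequences). \Cref{lem:calc_one} shows that every term with $p \in \{1, \ldots, 2r\}$ is acyclic, so only the $p=0$ term $\OO_1(1)$ contributes. Because $\iota_1$ is a closed immersion, $H^\ast(F_1, \iota_{1\ast}\OO_{Z_1}(1)) = H^\ast(Z_1, \OO_{Z_1}(1))$, and we conclude
\begin{displaymath}
  H^\ast(Z_1, \OO_{Z_1}(1)) \cong H^\ast(F_1, \OO_1(1)).
\end{displaymath}

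Finally I would evaluate the right-hand side. Recalling that $\OO_1(1) = E_{P_1}(\omega_{2r-1})$ and that $\omega_{2r-1}$ is already dominant for $G$, Borel-Weil-Bott applies with $w$ the identity element (so $\ell(w)=0$ and $e \bullet \omega_{2r-1} = \omega_{2r-1}$), giving $H^0(F_1, \OO_1(1)) \cong \dual{V_G(\omega_{2r-1})}$ and $H^i(F_1, \OO_1(1)) = 0$ for $i > 0$. Since a representation and its dual have equal dimension, $\dim H^0(Z_1, \OO_{Z_1}(1)) = \dim V_G(\omega_{2r-1})$ with all higher cohomology vanishing, as claimed. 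As \Cref{lem:calc_one} is the genuine input, I do not anticipate a serious obstacle: the only points needing care are the regularity of the section, which makes the Koszul complex an actual resolution, and the identification $\OO_{Z_1}(1) = \iota_1^\ast\OO_1(1)$, both of which are already secured by the earlier lemmas.
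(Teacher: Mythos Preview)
Your proof is correct and follows essentially the same route as the paper: Koszul resolution of $\OO_{Z_1}(1)$ tensored by $\OO_1(1)$, collapse of the hypercohomology spectral sequence via \Cref{lem:calc_one}, and Borel--Weil--Bott on the surviving $p=0$ term. If anything, you are slightly more explicit than the paper in justifying the regularity of the section and the identification $H^\ast(F_1,\iota_{1\ast}\OO_{Z_1}(1)) = H^\ast(Z_1,\OO_{Z_1}(1))$.
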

\begin{proof}
  Recall that \(\OO_1(1)\vert_{Z_1} = \OO_{Z_1}(1)\) by 
  \Cref{lem:pic_restrict}. Thus we have the Koszul resolution:
  \begin{equation*}
    0 \rightarrow \wedge^{2r} \dual{\mathcal{E}}_1 \otimes \OO_1(1) 
    \rightarrow \wedge^{2r-1} \dual{\mathcal{E}}_1 \otimes \OO_1(1) 
    \rightarrow \cdots
    \rightarrow \dual{\mathcal{E}}_1 \otimes \OO_1(1) 
    \rightarrow \OO_1(1) 
    \rightarrow \OO_{Z_1}(1) \rightarrow 0.
  \end{equation*}
  It yields the spectral sequence
  \begin{displaymath}
    E_1^{-q,p} = H^p(F_1, \wedge^p \dual{\mathcal{E}}_1 \otimes \OO_{1}(1)) \implies H^{p-q}(Z_1, \OO_{Z_1}(1)).
  \end{displaymath}
  By the previous result, the only column on the first page which
  doesn't vanish is the cohomology of \(\OO_{1}(1)\). By Borel-Weil-Bott,
  \(H^0(F_1, \OO_{1}(1)) \cong \dual{V_G(\omega_{2r-1})}\) and so
  the spectral sequence implies
  \begin{displaymath}
    \dim H^0(Z_1, \OO_{1}(1)\vert_{Z_1}) = \dim V_G(\omega_{2r-1}),
  \end{displaymath}
  while all other cohomologies vanish.
\end{proof}

\begin{lemma}
  For \(p \in \{1, \ldots, 2r\}\), we have \(H^\ast(F_2, \wedge^p\mathcal{E}_2 \otimes \OO_2(1))=0\).
\end{lemma}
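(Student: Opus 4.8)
The plan is to follow the proof of \Cref{lem:calc_one} in structure, expressing $\wedge^p \mathcal{E}_2 \otimes \OO_2(1)$ as a sum of homogeneous bundles $E_{P_2}(\chi)$ and running Borel--Weil--Bott on each summand. First I would record that $\mathcal{E}_2 = E_{P_2}(\lambda)$ with $\lambda = 2\sum_{j=1}^{2r-1} L_j + L_{2r}$, so that the fibre of $\mathcal{E}_2$ is $\dual{V_{P_2}(\lambda)}$. The Weyl group $W_2$ of $P_2$ is generated by $\{s_1, \ldots, s_{3r-1}\} \setminus \{s_{2r}\}$ and is isomorphic to $S_{2r} \times W(\Sp{2r-2})$, where the symmetric factor permutes $L_1, \ldots, L_{2r}$ and the symplectic factor signed-permutes $L_{2r+1}, \ldots, L_{3r-1}$. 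Letting $W_2$ act on $\lambda$ shows that $V_{P_2}(\lambda)$ has the $2r$ weights $2\sum_{j=1}^{2r} L_j - L_k$ for $k \in \{1, \ldots, 2r\}$, each of multiplicity one, so the fibre weights of $\mathcal{E}_2$ are their negatives.

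The step where this case is genuinely easier than \Cref{lem:calc_one} is the decomposition of the exterior power. Here the Levi factor supporting the representation is of type $A_{2r-1}$, and modulo the central character $2\sum_{j=1}^{2r} L_j$ the fibre of $\mathcal{E}_2$ is the dual standard representation of $\SL{2r}$, whose exterior powers remain irreducible. Hence $\wedge^p \mathcal{E}_2$ is a \emph{single} homogeneous bundle $E_{P_2}(\theta_p)$, where $\theta_p$ is the unique $P_2$-dominant weight among the $p$-fold sums of the weights above; concretely $\theta_p$ is supported on $L_1, \ldots, L_{2r}$ and takes the two adjacent values $2p$ and $2p-1$. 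This contrasts with \Cref{lem:calc_one}, where the symplectic standard representation forced a genuine direct-sum decomposition indexed by the $\mu_t$. Tensoring by $\OO_2(1) = E_{P_2}(\omega_{2r})$ then gives $\wedge^p \mathcal{E}_2 \otimes \OO_2(1) = E_{P_2}(\theta_p + \omega_{2r})$, so the whole lemma reduces to a single Borel--Weil--Bott computation for each $p$.

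Finally I would invoke the Borel--Weil--Bott dichotomy in the form: $H^\ast(F_2, E_{P_2}(\chi))$ vanishes in every degree exactly when $\chi + \rho$ is singular, i.e. $\innerprod{\chi + \rho}{\dual{\alpha}} = 0$ for some root $\alpha$; since the roots of type $C$ are $\pm L_i \pm L_j$ and $\pm 2L_i$, this happens precisely when, writing $\chi + \rho$ in the $L_j$-basis, some coefficient vanishes or two coefficients coincide up to sign. Using $\rho = \sum_{j=1}^{3r-1}(3r-j)L_j$, I would compute the coefficients of $(\theta_p + \omega_{2r}) + \rho$; they break into a few blocks of consecutive integers whose ranges depend on how $p$ compares with $r$. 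The main obstacle, and the real content of the lemma, is the combinatorial bookkeeping needed to confirm that for \emph{every} $p \in \{1, \ldots, 2r\}$ one of these blocks meets $0$ or two of them overlap (or reflect) to produce a coincident pair, so that $\chi + \rho$ is singular throughout the range; this is where the argument must be split into cases according to the size of $p$ relative to $r$. Once the singularity is established uniformly, the dichotomy yields $H^\ast(F_2, \wedge^p \mathcal{E}_2 \otimes \OO_2(1)) = 0$ for all $p$ in the stated range.
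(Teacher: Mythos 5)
Your representation-theoretic setup agrees with the paper's: the weights of \(V_{P_2}(\lambda)\), the observation that the Levi here is of type \(A_{2r-1}\) so that the exterior power is a \emph{single} irreducible (the paper finds \(\wedge^p V_{P_2}(\lambda) = V_{P_2}(\mu_p)\), and your \(\theta_p\) is exactly \(\mu_p\)), and the reduction to one Borel--Weil--Bott computation per \(p\). The proof collapses, however, at precisely the step you defer as ``combinatorial bookkeeping'': that claim is false for the weight you wrote down. Since \(\theta_p\) has coefficients \(2p\) and \(2p-1\), the weight \(\chi_p = \theta_p + \omega_{2r}\) has coefficients \(2p+1\), \(2p\) and \(0\), so \(\chi_p\) is dominant for \(G\). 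Taking \(w=\mathrm{id}\) in Borel--Weil--Bott therefore gives
\begin{displaymath}
  H^0(F_2, E_{P_2}(\chi_p)) \cong \dual{V_G(\chi_p)} \neq 0;
\end{displaymath}
equivalently, \(\chi_p + \rho\) is the sum of a dominant and a strictly dominant weight, so its coefficients in the \(L_j\)-basis are strictly decreasing positive integers, and it is regular for every \(p\). No case analysis on \(p\) versus \(r\) can make it singular. (A sanity check says the same thing with no representation theory: \(\mathcal{E}_2\) is globally generated, its sections being those of \(\mathcal{E}\), so \(\wedge^p\mathcal{E}_2 \otimes \OO_2(1)\) certainly has nonzero \(H^0\).)

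What has gone wrong is a duality slip, abetted by a typo in the statement itself: the bundle whose cohomology must vanish --- the one appearing in the Koszul resolution behind \Cref{cor:dim_Z2}, and the one the paper's own proof computes with --- is \(\wedge^p\dual{\mathcal{E}}_2\otimes\OO_2(1)\), not \(\wedge^p\mathcal{E}_2\otimes\OO_2(1)\). Your text actually wavers between the two: the unique \(P_2\)-dominant weight among the \(p\)-fold sums of the fibre weights of \(\mathcal{E}_2\) is \((1-2p)\sum_{j=1}^{p}L_j - 2p\sum_{j=p+1}^{2r}L_j\), which is not your \(\theta_p\); the two differ by the longest element \(w_0\) of \(W_2\), i.e.\ exactly by dualising. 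Running your argument on the dual bundle instead gives, in the paper's convention \(E_P(\chi)=E_P(\dual{V_P(\chi)})\),
\begin{displaymath}
  \wedge^p\dual{\mathcal{E}}_2\otimes\OO_2(1) = E_{P_2}(\mu^\prime_p),
  \qquad
  \mu^\prime_p = -(2p-2)\sum_{j=1}^{p}L_j - (2p-1)\sum_{j=p+1}^{2r}L_j,
\end{displaymath}
and for this weight the singularity check genuinely works: regularity of \(\mu^\prime_p+\rho\) would force its \(3r-1\) coefficients to have pairwise distinct nonzero absolute values, hence some absolute value at least \(3r-1\), but for \(p\geq 2\) every coefficient has absolute value at most \(3r-2\), and for \(p=1\) only the \(L_1\)-coefficient reaches \(3r-1\) while no other reaches \(3r-2\). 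This is the paper's argument; your proposal, as written, cannot be completed, because its key unverified claim is the negation of what is true for the bundle you chose.
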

\begin{proof}
  Recall,
  \begin{displaymath}
    \mathcal{E}_2 = E_{P_2}\left(2\sum_{j=1}^{2r-1} L_j + L_{2r}\right).
  \end{displaymath}
  So put \(\lambda = 2\sum_{j=1}^{2r-1} L_j + L_{2r}\) and \(V = V_{P_2}(\lambda)\),
  meaning \(\dual{\mathcal{E}}_2 = E_{P_2}(V)\).
  The Weyl group \(W_2\) of \(P_2\) is the subgroup of \(W\)
  generated by \(\{s_1,\ldots, s_{3r-1}\} \setminus \{s_{2r}\}\).
  Thus, the extremal weights of \(V\) are of the form
  \begin{displaymath}
    2\sum_{j=1}^{2r} L_j -L_k,
  \end{displaymath}
  with \(k \in\{1,\ldots,2r\}\). Since there are \(2r\) of these,
  these make up all the weights of \(V\).
  From this, we deduce that the only dominant weight of
  \(\wedge^p V\) is
  \begin{displaymath}
    \mu_p = 2p\sum_{j=1}^{2r-p} L_j + (2p-1) \sum_{j=2r-p+1}^{2r} L_j,
  \end{displaymath}
  and thus \(\wedge^p V = V_{P_2}(\mu_p)\). Hence, \(\wedge^p \dual{\mathcal{E}}_2 = E_{P_2}(-w_0(\mu_p))\),
  where \(w_0\) is the longest element of \(W_2\). Since \(W_2\) is 
  isomorphic to \(S_{2r} \times W(\Sp{2r-2})\), we see that
  \(w_0\) swaps \( L_j \) and \(L_{2r-j+1}\) for \(j\in\{1,\ldots,2r\}\)
  and sends \(L_j\) to \(-L_j\) for \(j\in\{2r+1,\ldots, 3r-1\}\).
  So,
  \begin{displaymath}
    -w_0(\mu_p) = -(2p-1)\sum_{j=1}^p L_j - 2p\sum_{j=p+1}^{2r}L_j.
  \end{displaymath}
  We thus write
  \begin{displaymath}
    \mu^\prime_p = -(2p-2)\sum_{j=1}^p L_j - (2p-1)\sum_{j=p+1}^{2r}L_j,
  \end{displaymath}
  so that \(\wedge^p \dual{\mathcal{E}}_2 \otimes \OO_2(1) = E_{P_2}(\mu^\prime_p)\).
  Let \(\rho\) be the sum of the fundamental weights of \(G\).
  As explained in the proof of \Cref{lem:calc_one},
  for there to exist \(w \in W\) such that \(w \bullet \mu^\prime_p\)
  is dominant, there must be a coefficient in \(\mu^\prime_p + \rho\)
  with absolute value at least \(3r-1\), when written in
  the \(L_j\) basis. A quick
  calculation shows this is not possible for \(p>1\) and so,
  by Borel-Weil-Bott, \(H^\ast(F_2, \wedge^p\dual{\mathcal{E}}_2 \otimes \OO_2(1)) = 0\)
  for \(p>1\). When \(p=1\),  see that \(\mu_p^\prime = -\sum_{j=2}^{2r} L_j\),
  so while the coefficient of \(L_1\) in \(\mu_p^\prime + \rho\) 
  does have absolute value \(3r-1\), no other
  \(L_j\) has coefficient with absolute value at
  least \(3r-2\). Thus, we still have
  \(H^\ast(F_2, \wedge^p\dual{\mathcal{E}}_2 \otimes \OO_2(1)) = 0\)
  for \(p=1\).
\end{proof}

\begin{cor}\label[corollary]{cor:dim_Z2}
  We have \(\dim H^0(Z_2, \OO_{2}(1)\vert_{Z_2}) = \dim V_G(\omega_{2r})\)
  while all other cohomologies vanish.
\end{cor}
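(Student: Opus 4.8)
The statement I must prove is the exact analogue of \Cref{cor:dim_Z1} for \(Z_2\): namely that \(H^0(Z_2, \OO_2(1)\vert_{Z_2}) \cong \dual{V_G(\omega_{2r})}\) (so has dimension \(\dim V_G(\omega_{2r})\)), while all higher cohomologies of \(\OO_2(1)\vert_{Z_2}\) vanish. The whole setup mirrors the \(F_1\) situation exactly, so I expect to follow the same three-step recipe, simply transporting each ingredient from \(F_1\) to \(F_2\).

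The plan is to proceed as follows. First I would invoke \Cref{lem:pic_restrict} to identify \(\OO_2(1)\vert_{Z_2}\) with the ample generator \(\OO_{Z_2}(1)\), which legitimises writing the cohomology in terms of a bundle pulled back from \(F_2\). Second, since \(Z_2\) is cut out as the zero locus of a general section of the rank-\(2r\) bundle \(\mathcal{E}_2\) on \(F_2\), I would write down the Koszul resolution of \(\OO_{Z_2}\) twisted by \(\OO_2(1)\),
\begin{displaymath}
  0 \to \wedge^{2r}\dual{\mathcal{E}}_2 \otimes \OO_2(1) \to \cdots \to \dual{\mathcal{E}}_2 \otimes \OO_2(1) \to \OO_2(1) \to \OO_{Z_2}(1) \to 0,
\end{displaymath}
exactly as in \Cref{cor:dim_Z1}. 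This gives the hypercohomology spectral sequence
\begin{displaymath}
  E_1^{-q,p} = H^p(F_2, \wedge^q \dual{\mathcal{E}}_2 \otimes \OO_2(1)) \implies H^{p-q}(Z_2, \OO_{Z_2}(1)).
\end{displaymath}
Third, I would feed in the vanishing just established in the preceding lemma: for every \(p \in \{1, \ldots, 2r\}\) one has \(H^\ast(F_2, \wedge^p\dual{\mathcal{E}}_2 \otimes \OO_2(1)) = 0\). Consequently the only column of the first page that survives is the \(p=0\) term, which is the cohomology of the line bundle \(\OO_2(1)\) itself. By Borel-Weil-Bott applied to \(E_{P_2}(\omega_{2r})\), this cohomology is concentrated in degree \(0\) and equals \(\dual{V_G(\omega_{2r})}\). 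Since the spectral sequence then degenerates with a single nonzero entry, the abutment reads off directly: \(H^0(Z_2, \OO_{Z_2}(1)) \cong \dual{V_G(\omega_{2r})}\) and all other cohomologies vanish.

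The only point requiring genuine care is the Borel-Weil-Bott computation for \(\OO_2(1)\) on \(F_2 = \IGr{2r}{6r-2}\): I must confirm that \(\omega_{2r}\) is dominant for \(G\) (it is, being a fundamental weight), so that the relevant Weyl element is the identity, \(\ell = 0\), and the cohomology sits in degree zero. I expect this to be the main obstacle only in the bookkeeping sense — making sure the identification \(H^0(F_2, \OO_2(1)) \cong \dual{V_G(\omega_{2r})}\) is stated with the correct dual and index, consistent with the convention \(E_P(\chi) = E_P(\dual{V_P(\chi)})\) fixed earlier. Everything else is a formal consequence of the Koszul/spectral-sequence machinery already deployed verbatim for \(Z_1\), so the proof should be almost identical in structure to that of \Cref{cor:dim_Z1}, with \(\omega_{2r-1}\) replaced by \(\omega_{2r}\) throughout.
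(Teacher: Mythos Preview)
Your proposal is correct and follows exactly the approach of the paper, which simply says the proof is analogous to that of \Cref{cor:dim_Z1}. You have spelled out precisely that analogy: Koszul resolution twisted by \(\OO_2(1)\), the spectral sequence, the vanishing of the \(\wedge^q\dual{\mathcal{E}}_2\otimes\OO_2(1)\) terms from the preceding lemma, and Borel--Weil--Bott for \(\OO_2(1)=E_{P_2}(\omega_{2r})\).
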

\begin{proof}
  This is analogous to \Cref{cor:dim_Z1}.
\end{proof}

\begin{lemma}\label[lemma]{lem:cohoms_differ}
  We have that \(H^\ast(Z_1, \OO_1(1))\) is not isomorphic
  to \(H^\ast(Z_2, \OO_2(1))\).
\end{lemma}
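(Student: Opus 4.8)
The plan is to compare the two cohomology groups by computing their dimensions, which by Corollaries \ref{cor:dim_Z1} and \ref{cor:dim_Z2} reduce to comparing $\dim V_G(\omega_{2r-1})$ and $\dim V_G(\omega_{2r})$. Since both cohomology groups are concentrated in degree $0$ (all higher cohomologies vanish), any isomorphism $H^\ast(Z_1,\OO_1(1)) \cong H^\ast(Z_2,\OO_2(1))$ would force $\dim V_G(\omega_{2r-1}) = \dim V_G(\omega_{2r})$. So it suffices to show these two dimensions differ.

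First I would identify the representations $V_G(\omega_{2r-1})$ and $V_G(\omega_{2r})$ concretely. For $G = \Sp{6r-2}$, the fundamental representation $V_G(\omega_k)$ is the irreducible summand of $\wedge^k$ of the standard $(6r-2)$-dimensional representation $V_{\mathrm{std}}$; precisely, $\wedge^k V_{\mathrm{std}} = V_G(\omega_k) \oplus \wedge^{k-2}V_{\mathrm{std}}$ via contraction with the symplectic form, so
\begin{displaymath}
  \dim V_G(\omega_k) = \binom{6r-2}{k} - \binom{6r-2}{k-2}.
\end{displaymath}
Setting $n = 3r-1$ so that $\dim V_{\mathrm{std}} = 2n$, I would compute
\begin{displaymath}
  \dim V_G(\omega_{2r-1}) = \binom{2n}{2r-1} - \binom{2n}{2r-3}, \qquad
  \dim V_G(\omega_{2r}) = \binom{2n}{2r} - \binom{2n}{2r-2}.
\end{displaymath}
The plan is then to show these two integers are distinct. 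One clean way is to use the fact that the Weyl dimension formula gives a strictly monotonic behaviour for the fundamental weights of $C_n$ up to the middle: the dimensions $\dim V_G(\omega_k)$ are strictly increasing in $k$ for $1 \le k \le n$. Since $2r-1 < 2r \le 3r-1 = n$ for $r \ge 2$, this immediately yields $\dim V_G(\omega_{2r-1}) < \dim V_G(\omega_{2r})$, hence inequality.

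The main obstacle is establishing this strict monotonicity (or, equivalently, the inequality of the two binomial expressions) rigorously and cleanly. I expect the most self-contained route is a direct manipulation of the binomial differences: writing $a = 2r-1$, one checks that
\begin{displaymath}
  \dim V_G(\omega_{a+1}) - \dim V_G(\omega_a)
  = \binom{2n}{a+1} - \binom{2n}{a} - \binom{2n}{a-1} + \binom{2n}{a-2},
\end{displaymath}
and one shows this quantity is strictly positive whenever $a+1 \le n$. This can be done by combining adjacent binomial coefficients using the standard ratio $\binom{2n}{j+1}/\binom{2n}{j} = (2n-j)/(j+1)$, which exceeds $1$ precisely when $j < n$; since all four indices involved lie strictly below the midpoint $n$ for $r \ge 2$, the positive terms dominate. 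Rather than grinding through the full inequality, I would reduce it to this monotonicity statement and invoke it. The upshot is that $\dim H^0(Z_1,\OO_1(1)) \neq \dim H^0(Z_2,\OO_2(1))$, so the two graded cohomology groups cannot be isomorphic, completing the proof.
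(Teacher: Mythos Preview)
Your approach is essentially the same as the paper's: reduce to comparing \(\dim V_G(\omega_{2r-1})\) and \(\dim V_G(\omega_{2r})\) via Corollaries~\ref{cor:dim_Z1} and~\ref{cor:dim_Z2}, then use the formula \(\dim V_G(\omega_k)=\binom{2n}{k}-\binom{2n}{k-2}\) with \(n=3r-1\). The paper simply asserts the inequality at the end, whereas you try to justify it via monotonicity.

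One correction, though: your monotonicity claim ``\(\dim V_G(\omega_k)\) is strictly increasing for \(1\le k\le n\)'' is false as stated. For instance, with \(n=5\) one has \(\dim V_G(\omega_4)=165\) but \(\dim V_G(\omega_5)=132\); the sequence drops at the last step. The correct range is \(1\le k\le n-1\). This does not damage your argument, since for \(r\ge 2\) one has \(2r\le 3r-2=n-1\), so the comparison \(k=2r-1\) versus \(k=2r\) stays safely inside the region where monotonicity actually holds. Just tighten the statement and the sketch goes through; alternatively, for the specific pair \((2r-1,2r)\) a direct check that
\[
\binom{6r-2}{2r}-\binom{6r-2}{2r-1}\;>\;\binom{6r-2}{2r-2}-\binom{6r-2}{2r-3}
\]
(using that consecutive ratios \(\binom{2n}{j+1}/\binom{2n}{j}=(2n-j)/(j+1)\) are decreasing in \(j\) and exceed \(1\) for \(j\le 2r-1<n\)) avoids stating a general monotonicity lemma altogether.
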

\begin{proof}
  By \Cref{cor:dim_Z1} and \Cref{cor:dim_Z2},
  it is enough to show \(\dim V_G(\omega_{2r-1}) \neq \dim V_G(\omega_{2r})\).

  It is a standard fact \cite[Theorem 17.5]{fulton_harris:1991} that \(V_G(\omega_k)\) is the kernel
  of the surjective map \(\map{\Lambda}{\wedge^k \C^{6r-2}}{\wedge^{k-2} \C^{6r-2}} \)
  defined by
  \begin{displaymath}
    \Lambda(v_1 \wedge \cdots \wedge v_k) = \sum_{i<j} \tau(v_i,v_j)(-1)^{i+j-1} v_1 \wedge \cdots \wedge \hat{v}_i \wedge \cdots \wedge \hat{v}_j \wedge \cdots \wedge v_k,
  \end{displaymath}
  where \(\tau\) is the standard non-degenerate, alternating bilinear form on \(\C^{6r-2}\).
  Hence, \(\dim V_G(\omega_1) = 6r-2\) while, for \(k > 1\),
  \begin{displaymath}
    \dim V_G(\omega_k) = \binom{6r-2}{k} - \binom{6r-2}{k-2}.
  \end{displaymath}
  Using this, we find that \(\dim V_G(\omega_{2r}) \neq \dim V_G(\omega_{2r-1})\),
  proving the result.
\end{proof}

\section{\texorpdfstring{L-equivalences via \(F_4\) Grassmannians}{L-equivalences via F4 Grassmannians}}\label{sec:f4}

When \(G\) is the semisimple, simply connected group of type \(F_4\)
and \(Q, \, P_1\) and \(P_2\) the parabolics corresponding to the
Dynkin diagrams \dynkin F{*xx*}, \dynkin F{*x**} and \dynkin F{**x*}
respectively, then \(M=G/Q\) is a homogeneous roof of rank \(3\)
with projective bundle structures given by the maps
\(M \to F_i\) where \(F_i = G/P_i\).

We construct \(\mathcal{E}\) and the \( \mathcal{E}_i \) as in \Cref{section:homog}.
Recall that \(\mathcal{E}\) is rank \(1\) and \(\mathcal{E}_i\) is 
rank \(3\). 
Let \((Z_1, Z_2)\) be a Calabi-Yau pair associated to \(M\).
Using \Cref{prop:the_relation}, we get the identity
\begin{displaymath}
  [\P^1]([F_2]-[F_1]) = \L^2([Z_1]-[Z_2]).
\end{displaymath}
Again, we want to show \([F_1] =[F_2]\) and that \([Z_1] \neq [Z_2]\)
by showing \(Z_1 \not\cong Z_2\) and using \Cref{cor:equals_iso}.

\begin{lemma}
  We have \([F_1] = [F_2]\).
\end{lemma}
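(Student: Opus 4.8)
The plan is to proceed exactly as in the symplectic case (\Cref{lem:equal_fano}), except that here there is no convenient description of the \(F_4\)-Grassmannians as spaces of subspaces to count over \(\F_q\). Instead I would work directly with the cellular decomposition. By the Bruhat decomposition, \(F_i = G/P_i = \bigcup_{w \in W^{I_i}} Y(w)\) with \(Y(w) \cong \A^{\ell(w)}\), so in the Grothendieck ring
\begin{displaymath}
  [F_i] = \sum_{w \in W^{I_i}} \L^{\ell(w)},
\end{displaymath}
where \(I_{P_1} = \{\alpha_1, \alpha_3, \alpha_4\}\) and \(I_{P_2} = \{\alpha_1, \alpha_2, \alpha_4\}\). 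Thus it suffices to show that the two length-generating functions \(\sum_{w \in W^{I_i}} q^{\ell(w)}\) agree.

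For this I would invoke the standard factorization of the Poincar\'e polynomial of a parabolic quotient: since every \(w \in W\) decomposes uniquely as \(w = w^I w_I\) with \(w^I \in W^{I_i}\) and \(w_I\) in the Weyl group \(W_{I_i}\) of \(P_i\), and lengths add under this decomposition, we have
\begin{displaymath}
  \sum_{w \in W^{I_i}} q^{\ell(w)} = \frac{\sum_{w \in W} q^{\ell(w)}}{\sum_{w \in W_{I_i}} q^{\ell(w)}}.
\end{displaymath}
The numerator is the same for both \(i\), so the claim reduces to showing that the Weyl groups of \(P_1\) and \(P_2\) have equal Poincar\'e polynomials.

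The key step is to identify these two Weyl subgroups. The Weyl group of \(P_1\) is generated by \(s_1, s_3, s_4\); in the \(F_4\) Dynkin diagram, removing \(\alpha_2\) disconnects \(\alpha_1\) from the short-root pair \(\alpha_3 - \alpha_4\), so this subgroup is a Coxeter group of type \(A_1 \times A_2\). The Weyl group of \(P_2\) is generated by \(s_1, s_2, s_4\); removing \(\alpha_3\) disconnects the long-root pair \(\alpha_1 - \alpha_2\) from \(\alpha_4\), giving again a Coxeter group of type \(A_2 \times A_1\). Since the two parabolic Weyl groups are isomorphic as Coxeter systems, their Poincar\'e polynomials coincide, and equality of \([F_1]\) and \([F_2]\) follows.

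I expect the only genuine subtlety to be reading off the types of the two parabolic subdiagrams correctly from the \(F_4\) diagram, keeping track of which nodes the removed vertex separates. Once both are seen to be of type \(A_2 \times A_1\), the conclusion is immediate from the quotient formula and requires no explicit computation of the full \(F_4\) Poincar\'e polynomial.
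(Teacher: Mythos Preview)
Your argument is correct, and it is cleaner than the paper's own proof. Both proofs start from the same place---the Bruhat decomposition gives \([F_i] = \sum_{w\in W^{I_i}} \L^{\ell(w)}\)---but the paper then simply hands the computation to Sage, listing the resulting polynomial in \(\L\) explicitly. You instead invoke the standard length-additive decomposition \(W = W^{I}\cdot W_I\), which reduces the question to comparing the Poincar\'e polynomials of \(W_{I_1}\) and \(W_{I_2}\). Your identification of the subdiagrams is right: deleting \(\alpha_2\) from the \(F_4\) diagram leaves \(\alpha_1\) isolated and \(\alpha_3,\alpha_4\) joined by a simple edge, while deleting \(\alpha_3\) leaves \(\alpha_4\) isolated and \(\alpha_1,\alpha_2\) joined by a simple edge; in both cases the Coxeter type is \(A_1\times A_2\). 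The paper even records that \(W_1\) has exactly \(12\) elements when proving the next lemma, which is consistent with your claim. What your approach buys is a proof that requires no machine computation and makes transparent \emph{why} the equality holds (the symmetry of the \(F_4\) diagram under swapping the long and short halves). What the paper's approach buys is the explicit value of \([F_i]\) as a polynomial in \(\L\), which your argument does not produce but which is also not needed for the theorem.
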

\begin{proof}
  Let \(W_i\) be the Weyl group for \(P_i\) as a subgroup of \(W\),
  the Weyl group of \(G\).
  Using the cellular decomposition of \(F_i\), we can check if
  \([F_1] = [F_2]\) by calculating the lengths of the
  shortest representatives of \(W/W_i\). We do this using
  Sage \cite{sage} and the computations are recorded in
  the ancillary files. With these, we find
  \begin{align*}
    [F_1] = [F_2] =  1 + \L &+ 2\L^2 + 3\L^3 + 4\L^4 + 5\L^5 + 6\L^6
                      + 7\L^7 + 7\L^8 + 8\L^9 + 8 \L^{10} \\ 
                      &+ 8\L^{11} + 7\L^{12} + 7\L^{13} 
                      + 6\L^{14} + 5\L^{15} + 4\L^{16} + 3\L^{17} + 2\L^{18} + \L^{19} + \L^{20}.
  \end{align*}
\end{proof}

\begin{lemma}
  We have \(Z_1 \not \cong Z_2\).
\end{lemma}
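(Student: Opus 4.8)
The plan is to mirror the strategy used in the symplectic case: show that the two Calabi--Yau varieties $Z_1$ and $Z_2$ are non-isomorphic by exhibiting a numerical invariant on which they differ, namely the cohomology of the restriction of the ample generator. Since any isomorphism $Z_1 \cong Z_2$ must send the ample generator $\OO_{Z_1}(1)$ to $\OO_{Z_2}(1)$ (this is the content of \Cref{lem:pic_restrict}, which applies here since $F_4$ is not among the excluded types), it suffices to show that $H^\ast(Z_1, \OO_1(1)\vert_{Z_1})$ and $H^\ast(Z_2, \OO_2(1)\vert_{Z_2})$ are not isomorphic as graded vector spaces.

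To compute these cohomologies, I would follow exactly the template of \Cref{cor:dim_Z1}. Each $Z_i$ is the zero locus of a general section of the rank-$3$ ample bundle $\mathcal{E}_i = E_{P_i}(\chi_i)$, so the Koszul resolution of $\OO_{Z_i}(1)$ by the bundles $\wedge^p \dual{\mathcal{E}}_i \otimes \OO_i(1)$ for $p \in \{0, \ldots, 3\}$ yields a spectral sequence converging to $H^\ast(Z_i, \OO_{Z_i}(1))$. The key computation is then to show, via Borel--Weil--Bott, that $H^\ast(F_i, \wedge^p \dual{\mathcal{E}}_i \otimes \OO_i(1)) = 0$ for all $p \in \{1, 2, 3\}$, so that only the $p=0$ column survives and $H^\ast(Z_i, \OO_{Z_i}(1)) \cong H^\ast(F_i, \OO_i(1))$. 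By Borel--Weil, the latter is $\dual{V_G(\omega_{i})}$ (for the appropriate fundamental weight), so the problem reduces to showing $\dim V_G(\omega_2) \neq \dim V_G(\omega_3)$, where $\omega_2, \omega_3$ are the fundamental weights of $F_4$ corresponding to the crossed-out vertices. These dimensions are classical: the fundamental representations of $F_4$ have dimensions $26, 52, 273, 1274$ for $\omega_1, \omega_2, \omega_3, \omega_4$ respectively, so the relevant two dimensions are manifestly distinct and the varieties cannot be isomorphic.

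The main obstacle, and the part requiring genuine work, is the Borel--Weil--Bott vanishing for the intermediate exterior powers $\wedge^p \dual{\mathcal{E}}_i$. Unlike type $C$, the root system of $F_4$ is not of classical type, so I cannot simply track signs and permutations of an $L_j$-basis; the weights of $\dual{\mathcal{E}}_i$, their $p$-fold sums, the decomposition of $\wedge^p \dual{\mathcal{E}}_i$ into irreducible $P_i$-representations, and the check that no $w \in W$ makes $w \bullet \mu' + \rho$ regular and dominant all involve the less transparent $F_4$ weight combinatorics. Given that the paper already delegates the cell-counting of \Cref{lem:equal_fano}'s $F_4$ analogue to Sage, I expect this lemma's proof to similarly rely on a machine computation: enumerating the weights of each $\wedge^p \dual{\mathcal{E}}_i \otimes \OO_i(1)$, testing each against the $48$ elements of the Weyl group for regularity of $w \bullet \chi + \rho$, and recording that the singular (vanishing) case occurs for every $p \geq 1$. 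I would state the vanishing as the crux and cite the ancillary Sage files for the verification, then assemble the spectral-sequence and dimension-comparison steps as above to conclude $Z_1 \not\cong Z_2$.
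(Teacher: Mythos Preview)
Your approach is essentially the paper's: Koszul resolution plus Borel--Weil--Bott vanishing for $\wedge^p\dual{\mathcal{E}}_i\otimes\OO_i(1)$ with $p\geq 1$, collapsing the spectral sequence to $H^0(F_i,\OO_i(1))\cong \dual{V_G(\omega)}$ for the relevant fundamental weight, then comparing dimensions. The paper in fact does more of the weight bookkeeping by hand than you anticipate---it writes out $W_1$, its longest element, the three weights of $\dual{\mathcal{E}}_1$, and the resulting highest weights of $\wedge^2\dual{\mathcal{E}}_1$ and $\wedge^3\dual{\mathcal{E}}_1$---and only defers the final BWB singularity check to a computer (Rampazzo's online calculator rather than the Sage ancillary files), but the logic is identical.

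Two numerical slips to fix. First, in the Bourbaki labeling used throughout the paper (and by the \texttt{dynkin-diagrams} package), the fundamental $F_4$-representations have $\dim V(\omega_1)=52$, $\dim V(\omega_2)=1274$, $\dim V(\omega_3)=273$, $\dim V(\omega_4)=26$; your ordering is scrambled, though the conclusion $\dim V(\omega_2)\neq\dim V(\omega_3)$ is unaffected. Second, the Weyl group of $F_4$ has order $1152$, not $48$ (you may be thinking of the $48$ roots); in practice one does not enumerate $W$ but repeatedly applies simple reflections to $\chi+\rho$ until it is dominant or lands on a wall.
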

\begin{proof}
  Again, recall that \(\OO_i(1)\vert_{Z_i} = \OO_{Z_i}(1)\) by 
  \Cref{lem:pic_restrict}. Thus we have the Koszul resolution:
  \begin{displaymath}
    0 \rightarrow \wedge^3 \dual{\mathcal{E}}_i \otimes \OO_i(1)
    \rightarrow \wedge^2 \dual{\mathcal{E}}_i \otimes \OO_i(1)
    \rightarrow \dual{\mathcal{E}}_i \otimes \OO_i(1)
    \rightarrow \OO_i(1) 
    \rightarrow \OO_{Z_i}(1)
    \rightarrow 0.
  \end{displaymath}
  This allows us to calculate \(H^\ast(Z_i, \OO_{Z_i}(1))\),
  which we can use to show \(Z_1 \not\cong Z_2\).

  The Weyl group \(W\) of \(G\) is generated by
  \(\{s_1, s_2, s_3, s_4\}\) and their action on
  the fundamental weights is given by
  \begin{align*}
      &s_1(\omega_1) = -\omega_1 + \omega_2, \hspace{3.6em} s_2(\omega_2) = \omega_1 -
      \omega_2 + 2\omega_3, \\ 
      &s_3(\omega_3) = \omega_2 - \omega_3 + \omega_4, \hspace{2em} s_4(\omega_4)
      = \omega_3 - \omega_4.
  \end{align*}
  Hence, if \(W_1\) is the Weyl group of \(P_1\)
  then
  \begin{displaymath}
    W_1 = \{\id, s_1, s_3, s_4, s_1s_3, s_1s_4, s_3s_4, s_4s_3, s_1s_3s_4, s_1s_4s_3,
    s_3s_4s_3, s_1s_3s_4s_3\},
  \end{displaymath}
  which has longest element \(s_1s_3s_4s_3\). Then
  \begin{displaymath}
    \dual{\mathcal{E}}_1 = E_{P_1}(-s_1s_3s_4s_3(\omega_2 + \omega_3)) = E_{P_1}(-2\omega_2 + \omega_4).
  \end{displaymath}
  The orbit of \(-2\omega_2 + \omega_4\) under \(W_1\) is
  \begin{displaymath}
    \{-2 \omega_2 + \omega_4, \, -2\omega_2 + \omega_3 - \omega_4, \, -\omega_2
    - \omega_3\},
  \end{displaymath}
  so this must be all the weights of \(\dual{\mathcal{E}_1}\), each occurring with multiplicity one.
  By taking sums of these, we see that the weights of \( \wedge^2
  \dual{\mathcal{E}_1} \) are
  \begin{displaymath}
      \{-4\omega_2 + \omega_3, \, -3\omega_2 - \omega_3 + \omega_4, \, -3\omega_2 - \omega_4\}.
  \end{displaymath}
  The only one of these which is dominant with respect to \( P_1 \) is \(
  -4\omega_2 + \omega_3 \), so we must have
  \begin{displaymath}
      \wedge^2 \dual{\mathcal{E}_1} = E_{P_1}(-4\omega_2 + \omega_3).
  \end{displaymath}
  Summing weights again, we get that
  \begin{displaymath}
      \wedge^3 \dual{\mathcal{E}_1} = E_{P_1}(-5\omega_2).
  \end{displaymath}
  After tensoring everything by \( \OO_{P_1}(1) \), 
  the Koszul resolution yields an exact sequence
  \begin{displaymath}
      0 \rightarrow E_{P_1}(-4 \omega_2) \rightarrow E_{P_1}(-3\omega_2 +
      \omega_3) \rightarrow E_{P_1}(-\omega_2 + \omega_4)
      \rightarrow \OO_{P_1}(1) \rightarrow \OO_{Z_1}(1) \rightarrow 0
  \end{displaymath}
  To compute the cohomologies of these bundles, we again use Borel-Weil-Bott. The
  most straightforward way to do this is using a computer, for instance using
  Rampazzo's online calculator at \cite{rampazzo_calc}.
  We find that the cohomologies of \( E_{P_1}(-4\omega_2),
  E_{P_1}(-3\omega_2 + \omega_3) \) and \( E_{P_1}(-\omega_2 + \omega_4) \)
  vanish in every degree, so we conclude that \( H^i(Z_1, \OO_{Z_1}(1)) = 0 \)
  for \( i > 0 \), while
  \begin{displaymath}
      \dim H^0(Z_1, \OO_{Z_1}(1)) = \dim H^0(F_1, \OO_{P_1}(1)) = \dim
      V_G(\omega_2) = 1274.
  \end{displaymath}

  We can do something analogous for \(Z_2\) which will tell us that
  \begin{displaymath}
    \dim H^0(Z_2, \OO_{Z_2}(1)) = \dim V_G(\omega_3) = 273.
  \end{displaymath}

  Hence, \(Z_1 \not\cong Z_2\).
\end{proof}

\begin{proof}[Proof of \Cref{th:type_f4_zero}]
  We can use \Cref{cor:equals_iso} in this case so, as \(Z_1 \not \cong Z_2\),
  we know that \([Z_1]\neq [Z_2]\). Combining this with \([F_1] = [F_2]\)
  and using \Cref{prop:the_relation} yields
  \begin{displaymath}
    \L^2([Z_1]-[Z_2]) = 0.
  \end{displaymath}
\end{proof}

\bibliographystyle{plain}
\bibliography{refs}

\end{document}